\title{On groups definable in geometric fields with generic derivations}
\date{\today}
\author{Anand Pillay\thanks{University of Notre-Dame, supported by NSF grants DMS-2054271 and DMS-2502292} \and Fran\c coise Point\thanks{Universit\' e de Mons.} \and Silvain Rideau-Kikuchi\thanks{CNRS, \'Ecole normale sup\' erieure, partially supported
by GeoMod AAPG2019 (ANR-DFG), Geometric and Combinatorial Configurations in
Model Theory.}}
\newtheorem{Theorem}{Theorem}[section]
\newtheorem*{Theorem*}{Theorem}
\newtheorem{Proposition}[Theorem]{Proposition}
\newtheorem{Definition}[Theorem]{Definition}
\newtheorem{Lemma}[Theorem]{Lemma}
\newtheorem{Claim}[Theorem]{Claim}
\newtheorem{Corollary}[Theorem]{Corollary}
\newcommand{\N}{\mathbb N}
\newcommand{\dcl}{\mathrm{dcl}}
\newcommand\acl{\mathrm{acl}}
\newcommand\Td{T_\Delta}
\newcommand\Ld{L_\Delta}
\newcommand\prol[1]{\nabla_{\!#1}}
\newcommand\dcld{\dcl_\Delta}
\newcommand\acld{\acl_\Delta}
\newcommand{\alg}[1]{#1^{\mathrm{alg}}}
\newcommand\tensor{\otimes}
\newcommand{\DCF}{\mathrm{DCF}}
\newcommand{\tp}{\mathrm{tp}}
\newcommand\prolim{\varprojlim}
\newcommand{\drg}{{\Delta\mathrm{rg}}}
\newcommand\Ldrg{L_{\drg}}
\newcommand\restr[2]{#1\left|_{#2}\right.}
\newcommand\supsel{\succcurlyeq}
\renewcommand\mid{:}
\renewcommand\projlim{\varprojlim}
\newcommand\Stab{\mathrm{Stab}}
\begin{document}
\maketitle

\begin{abstract}
We study groups definable in existentially closed geometric fields
with commuting derivations. Our main result is that such a group can
be definably embedded in a group interpretable in the underlying
geometric field. Compared to earlier work of the first two authors
together with K. Peterzil, the novelty is that we also deal with
infinite dimensional groups.
\end{abstract}

\section*{Introduction}

This paper is about comparing groups definable in a given structure with groups
definable in a reduct. These latter groups are usually better understood,
providing in return many insights on the original groups.

The case of interest here is to compare groups definable in a field with a
derivation (and potentially additional structure), to groups definable without
the derivation. In the case of a differentially closed field\footnote{With no
further structure.}, the second author proved in \cite{Pil-GpDCF} that a
differential algebraic group differentially algebraically embeds in an algebraic
group (answering questions of Kolchin). The methods were stability-theoretic and
reminiscent of Weil's construction of an algebraic group out of a pregroup
\cite{Wei-PreGp}.

Our main goal here is to consider a generalization of this result to an unstable
context. We will be working with enriched geometric fields of characteristic
$0$. Following \cite[Definition~2.9]{HruPil}, but allowing additional structure,
we say that a (complete) theory \(T\) of fields with additional structures is
\emph{a geometric theory of enriched fields} if:
\begin{itemize}
\item In models of \(T\), model theoretic algebraic closure coincides with
relative field theoretic algebraic closure (over \(\dcl(\emptyset)\)).
\item Models of \(T\) are perfect and \(T\) eliminates the \(\exists^\infty\)
quantifier.
\end{itemize}

As noted by multiple authors, \cite{For,JohYe-Geom}, the first hypothesis
implies the second. Such a theory \(T\) is also said to be \emph{algebraically
bounded}.

Examples include real closed fields, \(p\)-adically closed fields, more
generally characteristic zero henselian valued fields, bounded pseudo
algebraically closed fields (such as pseudofinite fields), open theories of
topological fields \cite{CKPoi}, bounded perfect pseudo \(T\)-closed fields
\cite{MonRK-PTC} and curve excluding fields \cite{JohYe-CurvEx}, among others.

Recall that a geometric theory is a complete theory in an arbitrary language
such that in models of $T$ algebraic closure is a pregeometry and \(T\)
eliminate the \(\exists^\infty\) quantifier. So a geometric theory of enriched
fields is a geometric theory, but the key point is that algebraic closure
coincides with (relative) field-theoretic algebraic closure. 

So let us fix some geometric theory \(T\) of enriched fields in some language
\(L\). At the cost of Morleyizing, we can always assume that \(T\) eliminates
quantifiers in a relational expansion of the ring language. Also fix some
integer \(\ell\) and \(\Delta = \{\partial_1,\ldots,\partial_\ell\}\). Consider
the theory of models of \(T\) with \(\ell\) commuting derivations in the
language \(\Ld = L\cup\Delta\). By \cite[Theorem~4.1]{ForTer}, this theory
admits a model completion that we will denote \(\Td\).

In this context, we prove the following.

\begin{Theorem*}[\cref{emb Gp}]
Let \(K \models \Td\) and let \(\Gamma\) be a group which is \(\Ld\)-definable
in \(K\). Then there is a group \(G\) which is \(L\)-interpretable in (the
reduct to \(L\) of) \(K\) and an \(\Ld\)-definable embedding \(\Gamma\to G\).
\end{Theorem*}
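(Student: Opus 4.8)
The strategy is the classical one for results of this type (as in the stable case treated in \cite{Pil-GpDCF}): realize $\Gamma$ as a "pregroup" in the $L$-reduct, using prolongations to pass from the differential structure to the pure-field structure, and then apply a Weil–Hrushovski-style group chunk theorem to build a genuine $L$-interpretable group $G$.

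First I would recall the basic mechanism of prolongations. For $K\models\Td$ and a definable set $X\subseteq K^n$ (say $\Ld$-definable), the prolongation $\prol{m}X$ lives in $K^{n\cdot\binom{\ell+m}{\ell}}$ and is $L$-definable from parameters that are the images under $\Delta$-derivatives of the parameters of $X$; the point $\nabla_m(a)=(a,\partial^\alpha a)_{|\alpha|\le m}$ of a differential tuple $a$ lies in $\prol{m}X$. Because $T$ is algebraically bounded, model-theoretic algebraic closure in $\Td$ is controlled by field-theoretic algebraic closure of the differential field generated, and the key finiteness: for a single element $a$ in a definable set, there is a bound $m$ such that $\nabla_m(a)$ already pins down all higher derivatives over the base, up to finitely many choices — i.e. $a$ is "of finite $\Delta$-rank" relative to the appropriate base. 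This is where I expect the infinite-dimensional subtlety to bite, and the sentence in the abstract about "infinite dimensional groups" suggests the authors handle the case where no single $m$ works uniformly on all of $\Gamma$: one must instead stratify $\Gamma$ into pieces on which the generic type has finite $\Delta$-rank, or work type-by-type. I would handle this by passing to a generic element $g\models p$ for $p$ a generic type of $\Gamma$ over a small model $K_0\models\Td$, let $m$ be large enough that $\nabla_m(g)$ controls $g$ over $K_0$ in the sense that $\dcld(K_0 g)$ and $\dcl_L(K_0,\nabla_m(g))$ have the same field-theoretic algebraic closure, and let $X=\nabla_m(\text{a definable chunk of }\Gamma\text{ containing }g)\subseteq K^N$. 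Then $X$ is $L$-definable and $g\mapsto\nabla_m(g)$ is an $\Ld$-definable injection on that chunk.

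The heart of the argument is then to transport the group operation. Take independent generics $g,h$ of $\Gamma$ over $K_0$ (independence in the sense of the geometric structure $T$, i.e. algebraic-closure independence, which exists by the pregeometry hypothesis; $\Td$ inherits a workable notion of independence from $T$ via prolongations). The product $gh$ is again generic, and $\nabla_m(gh)$ is, by the chain rule and the fact that the group operation of $\Gamma$ is given by $\Ld$-definable — hence, on $\prol{m}$, by $L$-definable — maps, an $L$-definable function of $\nabla_{m'}(g)$ and $\nabla_{m'}(h)$ for a possibly larger $m'$; enlarging $m$ once and for all, we may take $m'=m$. So on the image of the chunk we obtain a partial $L$-definable binary operation $*$ with $\nabla_m(g)*\nabla_m(h)=\nabla_m(gh)$ for independent generic $g,h$, and associativity and generic cancellation transfer from $\Gamma$. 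This exhibits $(X,*)$ as a Hrushovski group chunk (a "$*$-definable" or "type-definable" group chunk) in the $L$-reduct of $K$, which is a model of the algebraically bounded theory $T$. Since $T$ is geometric, Hrushovski's group-chunk theorem (in the form valid for geometric theories, cf. the generalizations of the Weil/Hrushovski construction to rosy or simply to geometric contexts) produces a group $G$ that is $L$-interpretable in the $L$-reduct of $K$, together with a generic $L$-definable embedding of the chunk $X$ into $G$.

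Finally I would upgrade the generic embedding to an embedding of all of $\Gamma$. The map $\iota\colon g\mapsto(\text{image of }\nabla_m(g)\text{ in }G)$ is defined and injective on the generic chunk; using the group operations on both sides, extend it by $\iota(g)=\iota(g g_0^{-1})\iota(g_0)$ for a suitable generic $g_0$ — the standard argument shows this is well-defined (independence of the choice of decomposition) and a group homomorphism on all of $\Gamma$, and it is $\Ld$-definable since $\nabla_m$ and the $L$-definable data are. Injectivity on all of $\Gamma$ follows because the kernel is an $\Ld$-definable normal subgroup that meets the generic chunk trivially, hence is trivial (it has smaller "rank", or more carefully: it is contained in the complement of a generic set and normal, forcing it to be finite, and then central and controlled by $\nabla_m$, hence trivial — one may need to absorb a finite central kernel by a further quotient, which is harmless since $G$ stays $L$-interpretable). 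The main obstacle, as flagged, is making the choice of prolongation level $m$ work in the infinite-dimensional setting: one cannot assume $\Gamma$ itself has finite $\Delta$-dimension, so the argument must be localized at a generic type and the resulting $m$, $X$, $*$ and $G$ assembled coherently — this is presumably where the bulk of the paper's technical work, and its advance over \cite{Pil-GpDCF} and the Peterzil–Pillay–Point predecessor, is concentrated.
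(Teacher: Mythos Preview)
Your overall architecture---prolongations, a Weil--Hrushovski group chunk in the $L$-reduct, then extend from generics to all of $\Gamma$---is the right one, and is what the paper does. But your implementation has a genuine gap at exactly the infinite-dimensional point you flag.

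You write: ``let $m$ be large enough that $\nabla_m(g)$ controls $g$ over $K_0$ in the sense that $\dcld(K_0 g)$ and $\dcl_L(K_0,\nabla_m(g))$ have the same field-theoretic algebraic closure''. This is precisely the finite-dimensional hypothesis: it asks that $K_0(\nabla_\omega(g))$ be algebraic over $K_0(\nabla_m(g))$. For an infinite-dimensional generic type no such $m$ exists, so localizing at a single generic does not help. Relatedly, your step ``enlarging $m$ once and for all, we may take $m'=m$'' fails: if the $\Ld$-group law is $a\cdot b = F_0(\nabla_n(a),\nabla_n(b))$ with $F_0$ $L$-definable, then $\nabla_m(a\cdot b)$ depends on $\nabla_{n+m}(a),\nabla_{n+m}(b)$, and you cannot recover these from $\nabla_m(a),\nabla_m(b)$ in the $L$-reduct unless the higher jets are already $L$-algebraic over the lower ones---again the finite-dimensional case. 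So there is no finite level at which the $L$-operation closes up.

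The paper's fix is to abandon finite $m$ entirely: work with the infinite tuple $\nabla_\omega$, so that the group law becomes a genuinely $*$-definable (pro-definable) $L$-map $F(\nabla_\omega(a),\nabla_\omega(b))=\nabla_\omega(a\cdot b)$. The group chunk then lives on infinite tuples, and one invokes the pro-definable group-chunk machinery of \cite{HruRK-MetaGp} (pregroup $\Rightarrow$ $*$-interpretable group with definable translation-invariant type $\Rightarrow$ projective limit of interpretable groups), not a geometric-theory group-chunk theorem on a finite-dimensional piece. For this machinery to apply, one needs the global partial $L$-type $\Sigma$ of $\nabla_\omega$ of a generic of $\Gamma$ to be \emph{definable} over the base; establishing this (the paper's \cref{gen def}, using Morley rank in $\DCF_{\Delta,0}$ to define ``generic'' and \cref{pure tr} on pure transcendence of high prolongations) is the main technical content, and it does not appear in your sketch. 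Once one has the $*$-interpretable $G=\varprojlim_i G_i$ and the embedding $\Gamma\hookrightarrow G$, projecting to some $G_i$ gives the interpretable target; your final paragraph's kernel argument is unnecessary (and the ``finite, hence central, hence trivial'' step is not justified).
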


Moreover, if \(\Gamma\) is definable over some \(A\subseteq K\), then the group
\(G\) and the embedding can also be chosen over \(A\).

In \cite{PetPilPoi} the first two authors and K. Peterzil proved the
finite-dimensional case (see \cite[Definition~2.2]{PetPilPoi}) of our main result
--- first considering $K \models RCF$, then generalizing. Assuming that
\(\Gamma\) is finite dimensional, one recovers a generically given
\(L\)-definable group, and proceeds from there. In the case of possibly
infinite-dimensional $\Gamma$ one will obtain some kind of generically given
\(L\)-definable group but living on infinite tuples, and there are additional
technical complications.  It turns out that general results from
\cite{HruRK-MetaGp} are in a sense tailor-made to handle such situations, so we
will appeal to them. Possibly, the methods of \cite{KowPil} giving another
account of \cite{Pil-GpDCF} would also adapt to the present setting.

On the face of it, the main theorem for finite-dimensional definable groups
follows from the theorem for arbitrary definable groups. However in
\cite{PetPilPoi}, the group \(G\) can be chosen to be \(L\)-definable and not
only \(L\)-interpretable --- no quotient is required.

Note also that, in \cite{PetPilPoi}, the finite-dimensional case was also
considered in other contexts, namely $o$-minimal expansions of real closed
fields, which fall outside the scope of the present paper. Also, in
\cite{PetPilPoi-2}, Buium's notion of an "algebraic D-group" is adapted from
the context of algebraically closed fields with a derivation to models of \(T\)
with a derivation and finite-dimensional \(\Ld\)-definable groups are shown to
be precisely groups of "sharp points" of D-groups.

We would like to thank K. Peterzil for discussions on the strategy, ideas and
content of the present paper. We also thank the anonymous referee for their many
helpful comments.

\section{Geometric fields with generic derivations}

Let us start by recalling some facts about geometric fields.

\begin{Definition}
Let \(K\models T\) and let \(X\subseteq K^n\) be definable (with parameters).
The dimension \(\dim(X)\) of \(X\) is the dimension (in the sense of algebraic
geometry) of the Zariski closure of \(X\) in affine \(n\)-space.
\end{Definition}

If \(X\) is definable over some subfield \(A\subseteq K\) and if \(K\) is sufficiently
saturated, \(\dim(X)\) is the maximal transcendence degree of some \(c\in X(K)\)
over \(A\).

In a geometric theory, dimension is definable in the sense that for all
definable \(X\subseteq Y\times Z\), for every \(n\geq 0\), \(\{z\in Z \mid
\dim(X_z) = n\}\) is definable, where \(X_z = \{y\in Y\mid (y,z) \in X\}\) is
the fiber of \(X\) at \(z\).

Let us now consider the properties of models of \(\Td\). Let \(K,M \models \Td\)
and let \(A\subseteq K\). We say that an injective ring embedding \(f : A\to M\)
is an \(L\)-elementary embedding if it preserves \(L\)-types (between the
reducts of \(K\) and \(M\) to \(L\)) and that it is differential if it preserves
the derivations. If we want to specify that we work in the reduct to \(L\), we
will indicate it with a subscript \(_L\), like in the notation \(\dcl_L\) for
the \(L\)-definable closure. Notation with a subscript \(_\drg\) will refer to
the differential field structure and notions with a subscript \(_\Delta\), or
no subscript, will refer to the full \(\Ld\)-structure.

\begin{Lemma}
\label{Td embed}
Let \(K,M\models \Td\), let \(A\leq_\drg K\) be a differential subfield and
let \(f : A \to M\) be a differential \(L\)-elementary embedding. Then there
exists an \(\Ld\)-elementary extension \(M'\succcurlyeq M\) and a differential
\(L\)-elementary embedding \(g : K\to M'\) extending \(f\) such that
\(g(K)\) is algebraically independent of \(M\) over \(g(A)\).
\end{Lemma}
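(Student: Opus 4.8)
The plan is to build the embedding \(g\) into a sufficiently saturated (say \(|K|^{+}\)-saturated) \(\Ld\)-elementary extension \(M^{*}\succcurlyeq M\), so that \(M^{*}\models\Td\) automatically, and then take \(M':=M^{*}\); the assertion that \(M^{*}\) is an \(\Ld\)-elementary extension of \(M\) is then free. By a transfinite recursion it will suffice to treat a \emph{one-generator step}: write \(K\) as the union of a continuous increasing chain of differential subfields \(A=B_{0}\leq_\drg B_{1}\leq_\drg\cdots\leq_\drg B_{\lambda}=K\) with each \(B_{i+1}=B_{i}\langle c_{i}\rangle_{\Delta}\), and construct compatible differential \(L\)-elementary embeddings \(g_{i}\colon B_{i}\to M^{*}\) with \(g_{0}=f\) and \(g_{i}(B_{i})\) algebraically independent from \(M\) over \(f(A)\). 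Limit ordinals and unions of chains pose no problem, since both ``differential \(L\)-elementary'' and algebraic independence are witnessed by finite tuples.

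For the one-generator step, write \(g:=g_{i}\), \(B:=B_{i}\), \(c:=c_{i}\notin B\), \(E:=B\langle c\rangle_{\Delta}\), and let \(\nabla(c)=(\partial^{\alpha}c)_{\alpha}\) be the (at most countable) tuple of all iterated derivatives of \(c\), so that \(E=B(\nabla(c))\). The goal is to find \(d\in M^{*}\) such that (i) \(\nabla(d)\) realizes the pushforward under \(g\) of the complete \(L\)-type \(\tp^{L}_{K}(\nabla(c)/B)\), and (ii) \(\nabla(d)\) is algebraically independent from \(M\) over \(g(B)\). Granting such a \(d\), the assignment \(\nabla(c)\mapsto\nabla(d)\) extends \(g\) to a ring embedding \(h\colon E\to M^{*}\) which is differential by (i) together with the defining relations, and which is \(L\)-elementary because any finite tuple from \(E\) is a tuple of rational functions over \(B\) in a finite subtuple of \(\nabla(c)\), so preservation of \(L\)-types on \(E\) over \(B\) reduces to preservation on finite subtuples of \(\nabla(c)\) --- exactly what (i) provides. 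Moreover \(h(E)=g(B)(\nabla(d))\) is algebraically independent from \(M\) over \(f(A)\), by (ii), the inductive hypothesis on \(g(B)\), and transitivity of field-theoretic algebraic independence. Setting \(g_{i+1}:=h\) completes the step, and then \(g:=\bigcup_{i}g_{i}\colon K\to M^{*}\) is as required.

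It thus remains to produce \(d\), and this is the crux. The existence of \(d\) amounts to the finite satisfiability in \(M^{*}\) of the \(\Ld\)-type \(r(y)\) over \(M\cup g(B)\) asserting ``\(\varphi(\nabla(y))\)'' for every \(\varphi\) in the pushforward of \(\tp^{L}_{K}(\nabla(c)/B)\), together with ``\(\dim\bigl((\partial^{\alpha}y)_{|\alpha|\le m}/M\cup g(B)\bigr)=\dim\bigl((\partial^{\alpha}c)_{|\alpha|\le m}/B\bigr)\)'' for each \(m\); each clause is \(\Ld\)-expressible, since finite truncations of \(\nabla(y)\) are given by \(\Ld\)-terms in \(y\) and dimension is definable. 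I would establish finite satisfiability by unwinding the geometric, model-completion axiomatization of \(\Td\) from \cite[Theorem~4.1]{ForTer}: a finite fragment of \(r\) is, in each bounded order, a system consisting of an \(L\)-definable locus together with the prolongation-compatibility conditions imposed by \(\Delta\); the minimality hypotheses of the relevant axioms of \(\Td\) are supplied by the differential-algebraic type of \(c\) over \(B\) --- its differential transcendence degree, respectively the order of a minimal differential polynomial of \(c\) over \(B\) --- and the dimension clauses are met by taking a realization of maximal dimension over \(M\), using definability of dimension once more. Saturation of \(M^{*}\) then yields \(d\).

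The main obstacle is precisely this last step: one must realize a prescribed complete \(L\)-type not by an arbitrary tuple but by the \emph{full prolongation} of a single element, and simultaneously generically over \(M\); reconciling these three requirements is what forces both a case analysis on the differential-algebraic type of \(c\) over \(B\) and a careful translation into the axioms of \cite{ForTer}. A related subtlety worth flagging is that no shortcut is available through model-completeness of \(T\), which in general fails for a geometric theory of enriched fields: the base \(B\) is only a differential \emph{substructure} of \(K\) and of \(M^{*}\), not an elementary one, so what is being carried out is amalgamation over an arbitrary substructure, and it is algebraic boundedness of \(T\) --- the coincidence of \(\acl\) with relative field-theoretic algebraic closure --- that makes this possible.
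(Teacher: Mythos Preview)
Your transfinite one-generator strategy is sound in outline, but the step you yourself flag as ``the crux'' --- producing \(d\in M^{*}\) whose full prolongation simultaneously realizes the pushed-forward \(L\)-type and is algebraically independent from \(M\) --- is not actually carried out. You gesture at the Fornasiero--Terzo axiomatization and a case analysis on the differential-algebraic type of \(c\), but the axioms as stated in \cite{ForTer} are phrased for a single derivation (the multi-derivation case is only asserted to have a model completion, not axiomatized explicitly there), and even granting an axiomatization you would need to check that an arbitrary formula from \(\tp_L(\nabla_m(c)/B)\), together with an arbitrary Zariski-open condition over \(M\), still cuts out a set of the shape the axioms require. You also do not pass to the relative algebraic closure of \(B\) in \(K\) before the step, so the locus of \(\nabla_m(c)\) over \(B\) need not be geometrically irreducible, which is typically what such axiom schemes demand. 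None of this is fatal, but as written the heart of the argument is a promise rather than a proof.

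The paper's proof avoids all of this by decoupling the \(L\)-theoretic and differential parts. First it extends \(f\) to the relative algebraic closure \(A_0 = \alg{A}\cap K\); this is harmless since derivations extend uniquely to \(\alg{A}\), and it makes \(A_0\leq K\) regular. Next it observes that for any \(L\)-formula \(X\in\tp_L(a/A_0)\), the Zariski closure of \(X\) over any extension is \(\dcl_L(A_0)\)-definable and hence equals the full locus \(V\) of \(a\); by compactness one can therefore realize \(\tp_L(a/A_0)\) in some \(M'\succcurlyeq_L M\) algebraically independently from \(M\) over \(A_0\), giving an \(L\)-elementary \(g_L:K\to M'\) with \(g_L(K)\) free from \(M\). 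Crucially, this step involves no derivations at all. Regularity now gives \(g_L(K)\cdot M\cong K\otimes_{A_0}M\), and the derivations on the two factors extend uniquely and compatibly to the tensor product by elementary differential algebra. Finally, since \(\Td\) is the model companion, \(M\) is existentially closed in this differential \(L\)-structure, so the whole thing embeds over \(M\) into some \(M_1\succcurlyeq_{\Ld} M\). Thus the paper never needs the explicit axioms of \(\Td\), nor any transfinite recursion, nor the delicate ``realize an \(L\)-type by a prolongation'' step: algebraic independence plus unique extension of derivations does the work for you in one shot.
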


\begin{proof}
Let \(A_0 = \alg{A}\cap K\) be the relative field algebraic closure of \(A\) in
\(K\). Since \(f\) is \(L\)-elementary, and $L$ contains the ring language, 
we can extend \(f\) to \(\overline{f} :
A_0 \to M\) as an \(L\)-elementary embedding. Moreover, since \(\Delta\)
extends uniquely to \(\alg{A}\), the embedding \(\overline{f}\) is also a differential field
embedding.

Fix some tuple \(a \in K^m\) and let \(V\) be its (geometrically integral) locus
over \(A_0\). For any \(X\in \tp_L(a/A_0)\) (namely $X$ is the solution set of a formula
in the type) contained in \(V\), let \(W\) be the
Zariski closure of \(X\) over \(K\). Then \(W\subseteq V\), it contains \(a\)
and, by invariance, it is defined over \(\dcl_L(A_0) = A_0\) in the language $L$. So \(W = V\). 
By compactness, $tp_{L}(a/A_{0}) \cup \{\neg\phi(x):\phi$ defining a proper Zariski closed subset of $V$, defined over $K\}$
is consistent. 
It follows that we can extend \(\overline{f}\) to an
\(L\)-elementary \(g_L : K \to M' \succcurlyeq_L M\) such that \(g_L(K)\) is
algebraically independent of \(M\) over \(g_L(A_0)\).

Since \(A_0\leq K\) is a regular extension, it follows that the compositum
\(g_L(K)M\) is isomorphic to (the fraction field of) \(K\tensor_{A_0} M\) which
can therefore be made into a substructure of a model of \(T\) extending the
\(L\)-structure on both \(K\) and \(M\). The derivations on \(K\) and \(M\) also
extend uniquely to \(K\tensor_{A_0} M\) and the resulting derivations commute.
Since \(M\leq K\tensor_{A_0} M\) is \(\Ld\)-existentially closed,
\(K\tensor_{A_0} M\) can be embedded into some \(\Ld\)-elementary extension
\(M_1 \succcurlyeq M\), concluding the proof.
\end{proof}

Let \(\Theta\) denote the commutative monoid generated by \(\Delta\). Its
elements are of the form \(\theta = \partial_1^{e_1}
\ldots\partial_\ell^{e_\ell}\) for all integers \(e_1,\ldots e_\ell \geq 0\).
For such a \(\theta\), we define \(|\theta| = \sum_{i\leq \ell} e_i\). We order
the elements of \(\Theta\) by lexicographic order on \(|\theta|\) and the
components of \(\theta\).

For any tuple \(a\) in a differential field \((K,\Delta)\) and for any integer
\(n\geq 0\), we write \(\prol{n}(a) = (\theta a_i)_{i,|\theta|\leq n}\). We also
write \(\prol{\omega}(a) = (\theta a)_{i,\theta}\).

From \cref{Td embed}, we immediately recover a strong form of quantifier
elimination\footnote{This is implicit in \cite{ForTer}, but it is only stated
explicitly for a single derivation.}. Let \(K\models\Td\) and let \(A\subseteq K\).

\begin{Corollary}
\label{EQ Td}
\begin{enumerate}
\item Differential $L$-elementary maps between differential subfields of models
of $T_{\Delta}$ are $L_{\Delta}$-elementary.
\item If \(X\) is \(\Ld\)-definable over \(A\) in \(K\), there exists an integer \(n\)
and a set \(Y\) which is \(L\)-definable over \(A\) such that \(x\in X\) if and
only if \(\prol{n}(x) \in Y\).
\end{enumerate}
\end{Corollary}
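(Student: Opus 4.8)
The plan is to derive both items directly from \cref{Td embed} by back-and-forth arguments. For part (1), let $f : A \to B$ be a differential $L$-elementary map between differential subfields of models $K, M \models \Td$; I want to show it preserves all $\Ld$-formulas. The standard way is to show that the class of differential $L$-elementary maps between (small) differential subfields of models of $\Td$ has the back-and-forth property: given such an $f$ and an element $a \in K$ (resp. $b \in M$), one must extend $f$ to a differential $L$-elementary map whose domain contains $a$ (resp. whose range contains $b$). Once this is established, a routine induction on formula complexity (together with the fact that partial $L_\Delta$-types are determined by the data we are tracking, and that $\Td$ is the model completion so every model embeds into a sufficiently saturated one) shows that any such $f$ is $\Ld$-elementary. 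The extension step is exactly what \cref{Td embed} provides: working inside suitably saturated elementary extensions of $K$ and $M$, first replace $A$ by the differential field $A\langle a\rangle$ it generates with $a$ (this is a differential subfield of $K$, and $f$ extends uniquely to it as a differential $L$-elementary map provided $f$ already lands in a model — which we arrange by passing to the saturated extension and using that $A \leq_\drg K$ with $K$ a model; more carefully, one applies \cref{Td embed} with the pair $(K, M)$ and the map $f$ to get a differential $L$-elementary $g : K \to M' \succcurlyeq M$ extending $f$), and symmetrically on the other side. Iterating yields the back-and-forth system.

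For part (2), fix an $\Ld$-definable set $X$ over $A$ in $K$, say defined by an $\Ld$-formula $\varphi(x)$ with parameters in $A$. By quantifier elimination for $\Td$ relative to the $L$-structure — which is precisely the content we are extracting — or more directly as follows: I claim membership in $X$ is controlled by finitely many derivatives. Consider the $L$-type, over $A$, of $\prol{n}(x)$ for varying $n$; since in $\Td$ (by part (1), already proven) the full $\Ld$-type of $x$ over $A$ is determined by the $L$-type of $\prol{\omega}(x)$ over $A$ (viewing $A$ together with all its derivatives as an $L$-substructure), the set $X$ is a union of $L$-definable-over-$A$ conditions on $\prol{\omega}(x)$. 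Now invoke $\exists^\infty$-elimination / definability of the $L$-type, plus compactness: the condition "$x \in X$" is, by part (1) and compactness, equivalent to "$\prol{\omega}(x)$ satisfies $\Sigma$" for some partial $L$-type $\Sigma$ over $A$, and since $X$ is definable this is equivalent to a single $L$-formula, which mentions only finitely many of the coordinates $\theta x$, hence only $\prol{n}(x)$ for some fixed $n$. Taking $Y$ to be the solution set of that $L$-formula (in the appropriate number of variables) gives $x \in X \iff \prol{n}(x) \in Y$.

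The main obstacle is making the back-and-forth in part (1) genuinely symmetric and closing it up: \cref{Td embed} as stated extends $f$ to all of $K$ at once but at the cost of passing to an elementary extension $M' \succcurlyeq M$ and only guaranteeing algebraic independence, so one must be careful that iterating the one-sided extension on alternating sides — into larger and larger elementary extensions — still produces, in the limit, a map that is $\Ld$-elementary between the original $K$ and $M$; this is the usual but slightly delicate point that a back-and-forth system between $\aleph_0$-saturated (or just sufficiently saturated) models yields elementarity, and one should either invoke saturation of $K$ and $M$ from the start or phrase the conclusion for types of tuples and note it is enough. A secondary point is checking that $f$ extends \emph{uniquely} as a differential map when we adjoin $a$ — this is automatic in characteristic $0$ since derivations extend uniquely along algebraic extensions and freely along transcendental ones, and the "freely" case is exactly where the algebraic-independence clause of \cref{Td embed} is used.
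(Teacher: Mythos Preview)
Your approach is essentially the paper's: back-and-forth from \cref{Td embed} for (1), then compactness for (2), using that the $\Ld$-type of $x$ over $A$ is determined by the $L$-type of $\prol{\omega}(x)$ over $A$. The paper dissolves your ``main obstacle'' by assuming $K$ and $M$ sufficiently saturated from the start (so the auxiliary $M'$ is absorbed back into $M$), and your side remarks about unique differential extensions and $\exists^\infty$-elimination are unnecessary --- \cref{Td embed} hands you the extended differential $L$-elementary map on all of $K$ directly, and plain compactness (applied to both $X$ and its complement) suffices for (2).
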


\begin{proof}
Let \(K, M\models \Td\) be sufficiently saturated, let \(A\leq_\drg K\) be a
small differential subfield and \(f : A \to M\) be a differential
\(L\)-elementary embedding. Let \(B\leq K\) be small and contain \(A\). By
\cref{Td embed}, we may extend \(f\) to a differential \(L\)-elementary
embedding \(B \to M\) --- enlarging \(B\), we may  assume that it is an
\(\Ld\)-elementary substructure of \(K\). In other words, differential
\(L\)-elementary isomorphisms between small differential subfields have the
back-and-forth property and hence are \(\Ld\)-elementary. The first statement is
proved.

Now, it follows that, for every tuple \(a\) and \(b\) in \(K\), if
\(\tp_{L}(\prol{\omega}(a)) = \tp_{L}(\prol{\omega}(b))\) then \(\tp(a) =
\tp(b)\). The second statement  follows by compactness.
\end{proof}

We can also immediately characterize algebraic and definable closure in models
of \(\Td\).

\begin{Corollary}
\label{dcl Td}
\begin{enumerate}
\item The \(\Ld\)-algebraic closure \(\acld(A)\) of \(A\) is the relative field
algebraic closure in \(K\) of the differential field generated by \(A\).
\item The \(\Ld\)-definable closure \(\dcld(A)\) of \(A\) is the \(L\)-definable
closure of the differential field generated by \(A\).
\item Let  \(f : X\to Y\) be \(\Ld\)-definable over \(A\). There exists \(F : Z
\to W\) which is \(L\)-definable over \(A\) and an integer \(n\geq 0\) such that
\(\prol{n}(X)\subseteq Z\) and, for all \(x\in X\), \(f(x) = F(\prol{n}(x))\).
\end{enumerate}
\end{Corollary}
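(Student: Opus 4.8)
The plan is to derive all three parts from \cref{EQ Td} and \cref{Td embed}, reducing everything to the analogous facts about the reduct to $L$.

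\medskip

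\noindent\emph{Part (1).} Let $b\in\acld(A)$. By \cref{EQ Td}(1), differential $L$-elementary maps fixing $A$ pointwise are $\Ld$-elementary, hence send $b$ into the finite set of realizations of $\tp_\Delta(b/A)$; so it suffices to show that any differential $L$-elementary map fixing the relative field-theoretic algebraic closure $A_0$ of the differential field generated by $A$ also fixes $b$. But $b$, being $\Ld$-algebraic over $A$, lies in some finite $\Ld$-definable-over-$A$ set, and by \cref{EQ Td}(2) this set is cut out by $\prol{n}(x)\in Y$ with $Y$ $L$-definable over $A$; since the prolongation $\prol{n}$ is built from the derivations alone, $\prol{n}(b)$ is a tuple in the differential field generated by $Ab$, which is algebraic over the differential field generated by $A$, so $b\in A_0$. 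The reverse inclusion is clear since the derivations extend uniquely to $\alg{A}\cap K$, so relative field algebraic elements are $\Ld$-algebraic. Part (2) is the same argument with "algebraic" replaced by "fixed by all automorphisms", using that $L$-definable closure of a field $F$ equals $\dcl_L(F)$ and is itself a differential subfield once $F$ is.

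\medskip

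\noindent\emph{Part (3).} Apply \cref{EQ Td}(2) to the graph $\Gamma_f\subseteq X\times Y$ of $f$, which is $\Ld$-definable over $A$: there is an integer $n$ and an $L$-definable-over-$A$ set $G\subseteq (\text{ambient space})$ with $(x,y)\in\Gamma_f$ iff $\prol{n}(x,y)\in G$. Since $\prol{n}(x,y)$ is an interdefinable rearrangement of $(\prol{n}(x),\prol{n}(y))$, we get an $L$-definable-over-$A$ relation $R$ on the product of prolongation spaces with $(x,y)\in\Gamma_f \iff (\prol{n}(x),\prol{n}(y))\in R$. Enlarging $n$ if necessary so that it dominates the order of derivation implicit in the definition of $f$, one checks that on $\prol{n}(X)$ the relation $R$ is the graph of a (partial) $L$-definable function: given $x\in X$, the unique $y=f(x)$ satisfies $(\prol{n}(x),\prol{n}(y))\in R$, and $y$ is the first block of coordinates of $\prol{n}(y)$, so composing $R$ with the coordinate projection yields the desired $F$ with domain $Z\supseteq\prol{n}(X)$, codomain $W$, and $f(x)=F(\prol{n}(x))$ for all $x\in X$.

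\medskip

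\noindent The only delicate point is functionality of $F$ in part (3): a priori $R$ could relate $\prol{n}(x)$ to tuples that are not of the form $\prol{n}(y)$ for $y$ in the domain, or the projection could be multivalued on the image of $X$. This is handled by replacing $R$ with its intersection with the $L$-definable set $\{(u,v): v\in\prol{n}(Y)\}$ (using \cref{EQ Td}(2) again to see $\prol{n}(Y)$ is $L$-definable) and then restricting to the locus where the projection to the $y$-block is single-valued — which contains $\prol{n}(X)$ precisely because $f$ is a function. All other steps are routine bookkeeping with prolongations, which commute with the relevant constructions since they involve only the derivations.
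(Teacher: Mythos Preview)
There are genuine gaps in Parts (1) and (3); Part (2) inherits the problem from (1).

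\medskip

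\textbf{Part (1).} From $\prol{n}(b)\in Y$ with $Y$ $L$-definable over $A$ you conclude that $\prol{n}(b)$ (equivalently, the differential field generated by $Ab$) is field-algebraic over the differential field generated by $A$. This does not follow: the finiteness of $\{x:\prol{n}(x)\in Y\}$ does not force $Y$ itself to be finite, since $Y$ may contain arbitrarily many points outside the (non-$L$-definable) image of $\prol{n}$. Nor can you simply replace $Y$ by the finite set $\prol{n}(S)$, as those tuples are only known to lie in $\dcld(A)$, which is precisely what you are trying to identify. The paper's argument is different: it applies \cref{Td embed} directly. For $A_0$ a relatively algebraically closed differential subfield one obtains an $\Ld$-elementary (via \cref{EQ Td}(1)) embedding $g:K\to K'\succcurlyeq K$ over $A_0$ with $g(K)$ algebraically independent from $K$ over $A_0$; regularity gives linear disjointness, whence $\acld(A_0)\subseteq K\cap g(K)=A_0$. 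Your opening sentence gestures at such maps but never actually invokes \cref{Td embed} to produce one moving a given $b\notin A_0$.

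\medskip

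\textbf{Part (3).} Your claim that the projection of $R$ to the $y$-block is single-valued on $\prol{n}(X)$ is false in general, because \cref{EQ Td}(2) gives no control over $R$ off the image of $\prol{n}$. Concretely, take a single derivation, $X=K$, $f\equiv 0$. A legitimate choice at $n=1$ is
\[
R=\{(x_0,x_1,y_0,y_1): y_0=0 \ \text{or}\ (y_0=1\wedge y_1=1)\};
\]
the second disjunct is vacuous on genuine prolongations since $\partial 1=0$, so indeed $R(\prol{1}(x),\prol{1}(y))\Leftrightarrow y=0$, yet every fiber of $R$ projects onto $\{0,1\}$. Your proposed fix of intersecting with $\{(u,v):v\in\prol{n}(Y)\}$ fails too: $\prol{n}(Y)$ is essentially the graph of iterated derivations and is \emph{not} $L$-definable --- \cref{EQ Td}(2) goes in the opposite direction. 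The paper avoids the graph entirely: by Part (2) one has $f(x)\in\dcl_L(A\,\prol{\omega}(x))$, so compactness yields finitely many $L$-definable-over-$A$ functions $F_i$ with $f(x)=F_i(\prol{n}(x))$ on an $\Ld$-definable cover $(X_i)$ of $X$; then \cref{EQ Td}(2) is applied to each $X_i$ to get disjoint $L$-definable $Z_i$, and the $F_i$ are glued.
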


\begin{proof}

Let \(A  = \alg{A}\cap K \leq_\drg K\) be a relatively algebraically closed
differential subfield. By \cref{Td embed}, there exists a differential
\(L\)-elementary embedding \(g : K \to K' \succcurlyeq K\) extending the
identity on \(A\) and such that \(g(K)\) and \(K\) are algebraically independent
over \(A\). Since \(A\leq K\) is regular, \(K\) and \(g(K)\) are linearly
disjoint over \(A\). Moreover, by \cref{EQ Td}, \(g\) is \(\Ld\)-elementary and
thus \(g(K)\preccurlyeq K'\). It follows that \(\acld(A) \subseteq K\cap
g(K) = A\), concluding the proof of the first item.

Now, let \(A = \dcl_L(A) \leq_\drg K\) be a \(\dcl_L\)-closed differential
subfield of \(K\). By the first item, we have \(\dcld(A)\subseteq \acld(A)
\subseteq \alg{A}\). Consider \(a \in \alg{A}\cap K\setminus A\). Then, since
\(a\not\in\dcl_L(A)\), it has at least one other \(L\)-conjugate \(a'\in M\)
over \(A\). Since \(a,a'\in\alg{A}\), any \(L\)-elementary embedding sending
\(a\) to \(a'\) is also a differential embedding and hence, by \cref{EQ Td}, it
is \(\Ld\)-elementary. So \(a \not\in \dcld(A)\) and \(\dcld(A)\subseteq A\),
proving the second item.

Finally, let \(f : X\to Y\) be \(\Ld\)-definable over \(A\). For every \(x\in
X\), by the previous item, we have \(f(x)\in \dcl_L(A \prol{\omega}(x))\). By
compactness, it follows that there are finitely many maps \(F_i\) which are
\(L\)-definable over \(A\) such that for every \(x\in X\), there exists an \(i\)
such that \(f(x) = F_i(\prol{\omega}(x))\). Let \(X_i = \{x\in X\mid f(x) =
F_i(\prol{\omega}(x))\}\) and, by \cref{EQ Td}, let \(Z_i\) be \(L\)-definable
over \(A\) such that, for some sufficiently large \(n\), \(x\in X_i\) if and
only if \(\prol{n}(x) \in Z_i\). We may assume that the \(Z_i\) are disjoint. We
define \(F\) on \(Z = \bigcup_i Z_i\) by \(F(z) = F_i(z)\) if \(z\in Z_i\).
Then, for any \(x\in X\), we have \(f(x) = F(\prol{n}(x))\).
\end{proof}

Let us conclude this section with a purely differential statement which is
implicit in the proof of the existence of the Kolchin polynomial.

\begin{Lemma}
\label{pure tr}
Let \((K,\Delta)\) be a differential field (with finitely many commuting
derivations). Let \(K\leq_\drg K\langle a_1,\ldots,a_m\rangle_\Delta\) be a
finitely generated differential field extension. Let $a = (a_1,..,a_m)$. Then there exists an \(n\neq
0\) such that the extension \(K(\prol{n}(a))\leq K(\prol{\omega}(a))\) is purely
transcendental.
\end{Lemma}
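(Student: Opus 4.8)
The plan is to run the standard construction underlying the Kolchin polynomial, organised around a fixed ``ranking''. First fix a well-order $\preceq$ of order type $\omega$ on $\Theta\times\{1,\ldots,m\}$ that refines the order by $|\theta|$ and is compatible with the derivations, meaning $(\theta',j)\prec(\theta,i)\Rightarrow(\partial_k\theta',j)\prec(\partial_k\theta,i)$ for all $k$; ordering first by $|\theta|$, then lexicographically on the exponent vector of $\theta$, then by $i$ does the job (adding $1$ to a fixed coordinate of two exponent vectors changes neither their $|\cdot|$-order nor their lexicographic order). Writing $\sigma a$ for $\theta' a_j$ when $\sigma=(\theta',j)$, let $\Lambda$ be the set of $(\theta,i)$ for which $\theta a_i$ is algebraic over $K(\{\sigma a:\sigma\prec(\theta,i)\})$, and let $\Xi$ be its complement. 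A greedy argument shows that $\{\theta a_i:(\theta,i)\in\Xi\}$ is a transcendence basis of $K(\prol{\omega}(a))$ over $K$, and that for each $n$ its subset of elements with $|\theta|\le n$ is a transcendence basis of $K(\prol{n}(a))$ over $K$.

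Two structural facts drive the argument. First, $\Lambda$ is closed under differentiation: if $(\theta,i)\in\Lambda$ then $(\partial_k\theta,i)\in\Lambda$. Indeed, differentiate a minimal polynomial relation $P(\theta a_i)=0$ of $\theta a_i$ over $K(\{\sigma a:\sigma\prec(\theta,i)\})$; since we are in characteristic $0$, $P$ is separable, so $P'(\theta a_i)\ne0$ and one solves for $\partial_k\theta a_i$ as a rational function of $\theta a_i$, of the $\sigma a$ with $\sigma\prec(\theta,i)$, and of the $\partial_k\sigma a$ for those same $\sigma$ --- and compatibility of $\preceq$ places all the corresponding indices $\prec(\partial_k\theta,i)$. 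Hence each $\Lambda_i:=\{\theta:(\theta,i)\in\Lambda\}$ is an upward closed subset of $\Theta\cong\N^\ell$. Second, by Dickson's lemma $\Lambda_i$ has finitely many minimal elements; let $N\ge1$ bound their orders. Then for $(\theta,i)\in\Lambda$ with $|\theta|>N$, choosing a minimal $g\in\Lambda_i$ with $\theta\ge g$ gives $|\theta|-|g|\ge1$, so $\theta=\partial_k\theta_0$ for some $k$ and some $\theta_0\ge g$ with $|\theta_0|=|\theta|-1$; in particular $(\theta_0,i)\in\Lambda$ and $(\theta_0,i)\prec(\theta,i)$.

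I would then take $n=N$ and prove, by induction along $\preceq$, that every $\theta a_i$ lies in the field $\mathcal F:=K(\prol{N}(a))\bigl(\{\sigma a:\sigma\in\Xi,\ |\sigma|>N\}\bigr)$, where $|\sigma|:=|\theta'|$ for $\sigma=(\theta',j)$. This is clear when $|\theta|\le N$, and immediate when $(\theta,i)\in\Xi$; and when $(\theta,i)\in\Lambda$ with $|\theta|>N$, writing $\theta=\partial_k\theta_0$ with $(\theta_0,i)\in\Lambda$ as above and differentiating the minimal polynomial of $\theta_0 a_i$ as before expresses $\theta a_i$ as a rational function of derivatives indexed strictly below $(\theta,i)$, all of which lie in $\mathcal F$ by the induction hypothesis. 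Thus $K(\prol{\omega}(a))=\mathcal F$. Finally, $\{\sigma a:\sigma\in\Xi,\ |\sigma|>N\}$ is algebraically independent over $K(\prol{N}(a))$: the whole family $\{\sigma a:\sigma\in\Xi\}$ is algebraically independent over $K$, while $K(\prol{N}(a))$ is algebraic over the subfield generated by the part of order $\le N$, so a routine transcendence-degree computation separates the two halves. Therefore $K(\prol{N}(a))\le K(\prol{\omega}(a))$ is purely transcendental, and $N\ge1$.

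The delicate point is the bookkeeping in the third paragraph: one must make sure that solving for a ``leader'' $\theta a_i$ only ever invokes derivatives strictly earlier in $\preceq$, so that the induction along $\preceq$ is legitimate. This is exactly what compatibility of the ranking with the derivations provides, once Dickson's lemma guarantees that past order $N$ every element of $\Lambda_i$ is a derivative of a smaller element still in $\Lambda_i$. Everything else is standard manipulation of transcendence bases.
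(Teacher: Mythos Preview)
Your proposal is correct and follows essentially the same route as the paper: both set up a ranking on $\Theta\times\{1,\ldots,m\}$, isolate the derivatives that are algebraic over their predecessors (your $\Lambda$, the paper's $E$), show this set is stable under differentiation, use Dickson's lemma (argued by hand in the paper) to bound the orders of the minimal such elements, and conclude pure transcendence beyond that bound. Your treatment of the inductive step is in fact more careful than the paper's, where the displayed claim that $\eta\theta a_i\in K(\theta'a_j:\theta'a_j\leq\theta a_i)$ should really read $<\eta\theta a_i$; your compatibility-of-ranking argument is exactly what is needed to make that step go through.
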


\begin{proof}
We order the set of \(\theta a_i\), for all \(\theta\in\Theta\) and \(i\leq n\)
by lexical order first on \(\theta\) and then on \(i\) --- this is a well order
isomorphic to \(\omega\). Let \(E\) be the set of \(\theta a_i\) such that
\(\theta a_i \in \alg{K(\theta' a_j \mid \theta' a_j < \theta a_i)}\). For any
\(\theta a_i \in E\) and any \(\eta\in\Theta \setminus\{1\}\), we have \(\eta
\theta a_i \in K(\theta' a_j \mid \theta' a_j \leq \theta a_i)\).

Let \(E_0\) be the set of \(\theta a_i\in E\) which are not proper derivatives
of any element of \(E\). Identifying \(\{\theta a_i\mid \theta \in\Theta \text{
and } i\leq m\}\) naturally with a subset of \(\omega^\ell\times \{1,\ldots,
m\}\), if \(E_0\) is infinite, it contains a strictly increasing sequence for
the product order --- indeed starting with any sequence of pairwise distinct
elements in \(E\), one can iteratively extract subsequences to make each
projection increasing. So there is some \(i\leq m\) and some
\(\theta,\eta\in\Theta\) such that \(\theta a_i\) and \(\eta\theta a_i\) are in
\(E_0\). This contradicts the previous paragraph so \(E_0\) is finite.

Also, we have \(E = \Theta E_0\). Let \(n = \max_{\theta\in E_0} |\theta|\).
Then \(K(\prol{n}(a))\leq K(\prol{\omega}(a))\) is purely transcendental.
Indeed, for any \(\theta a_i\) with \(|\theta| > n\), either \(\theta a_i \in
E\) in which case \(\theta a_i \in K(\theta' a_j \mid \theta' a_j < \theta
a_i)\) or \(\theta a_i \not\in E\) in which case \(\theta a_i\) is
transcendental over \(K(\theta' a_j \mid \theta' a_j < \theta a_i)\).
\end{proof}

\section{Generic points}

Let \(K \models\Td\) be sufficiently saturated and homogeneous. We also fix an
elementary extension \(M\supsel K\) which is \(|K|^+\)-saturated in which to
realize (partial) types over \(K\). Let \(\Ldrg\) denote the language of differential rings. 

Recall that the \(\Ldrg\)-theory $\DCF_{\Delta, 0}$ of differentially closed
fields of characteristic $0$ with respect to the commuting derivations in
$\Delta$, is $\omega$-stable and has quantifier elimination. We let ${\mathcal
U}$ be a big model of $DCF_{\Delta,0}$ containing $M$. So complete quantifier
free types in \(\Ldrg\) over subsets of ${\mathcal U}$ correspond to complete
types in the sense of $\DCF_{\Delta, 0}$, so, as such, have an ordinal valued
Morley rank.  For a tuple $a$ from $M$ we will define the Morley rank of the
quantifier-free \(\Ldrg\) -type of $a$ over $K$ to be its Morley rank in
$\DCF_{\Delta,0}$.

Let $X$ be \(\Ld\)-definable in \(K\).  Let $S$ be the set of complete
quantifier-free types over $K$ (in the sense of  $\DCF_{\Delta, 0}$) which are
finitely satisfiable in $K$ by elements of $X$, equivalently realized in $M$ by
an element of $X$. Then by compactness (in $T_{\Delta}$), the set $S$ is a
closed subset of the space of quantifier-free complete types over $K$, in the
sense of $\DCF_{\Delta, 0}$.  So $S$ corresponds to a partial (quantifier-free)
type $\Sigma$ over $K$ in the sense of $\DCF_{\Delta, 0}$. Let \(\alpha\) be the
Morley rank of \(\Sigma\). By properties of Morley rank, we have:



\begin{Lemma}
$\Sigma$ extends to finitely many complete quantifier-free types over $K$ of Morley rank $\alpha$, say $p_{1},..,p_{k}$. 
\end{Lemma}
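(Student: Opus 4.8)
The statement is an instance of the classical fact that in a totally transcendental theory every partial type of Morley rank \(\alpha\) extends to only finitely many complete types of Morley rank \(\alpha\) --- its \emph{Morley degree}; the only point needing comment is that working with quantifier-free types rather than honest complete types changes nothing. The plan is as follows. Since \(\DCF_{\Delta,0}\) is \(\omega\)-stable it is totally transcendental, so every \(\Ldrg\)-formula over \(\mathcal U\) has an ordinal Morley rank, which I write \(\mathrm{RM}\), and a finite Morley degree. By quantifier elimination in \(\DCF_{\Delta,0}\), restriction to quantifier-free formulas identifies the complete types over \(K\) computed in \(\mathcal U\) with the complete quantifier-free \(\Ldrg\)-types over \(K\), and it preserves Morley rank; so I will freely identify \(\Sigma\) with the associated partial type over \(K\) in the sense of \(\DCF_{\Delta,0}\) and argue with arbitrary \(\Ldrg(K)\)-formulas.

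The first step is to pin the rank of \(\Sigma\) down to a single formula. The Morley ranks of the finite conjunctions of formulas of \(\Sigma\) form a non-empty set of ordinals, hence have a least element; let \(\phi_0\) be such a conjunction attaining it, and let \(d<\omega\) be the Morley degree of \(\phi_0\). Write \(p_1,\dots,p_d\) for the finitely many complete types over \(K\) of Morley rank \(\mathrm{RM}(\phi_0)\) that contain \(\phi_0\). For every finite \(\Sigma_0\subseteq\Sigma\) the formula \(\phi_0\wedge\bigwedge\Sigma_0\) is again a finite conjunction of members of \(\Sigma\), so by minimality \(\mathrm{RM}(\phi_0\wedge\bigwedge\Sigma_0)\geq\mathrm{RM}(\phi_0)\), hence equals it; therefore \(\phi_0\wedge\bigwedge\Sigma_0\) lies in some complete type of Morley rank \(\mathrm{RM}(\phi_0)\), which, since it contains \(\phi_0\), is one of the \(p_i\). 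Thus the sets \(J_{\Sigma_0}=\{\,i\leq d : \bigwedge\Sigma_0\in p_i\,\}\) are non-empty, and they form a decreasing family of subsets of the finite set \(\{1,\dots,d\}\); by the finite intersection property there is a fixed \(i\) with \(\Sigma\subseteq p_i\). Since \(\mathrm{RM}(p_i)=\mathrm{RM}(\phi_0)\) while \(\mathrm{RM}(p_i)\leq\alpha\leq\mathrm{RM}(\phi_0)\) (the first inequality because \(p_i\supseteq\Sigma\), the second because every completion of \(\Sigma\) contains \(\phi_0\)), all of these ranks equal \(\alpha\); in particular \(\Sigma\) has at least one completion of Morley rank \(\alpha\).

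The last step is to read off the conclusion: any complete quantifier-free type \(q\) over \(K\) extending \(\Sigma\) contains \(\phi_0\) because \(\Sigma\vdash\phi_0\), so if in addition \(\mathrm{RM}(q)=\alpha=\mathrm{RM}(\phi_0)\) then \(q\in\{p_1,\dots,p_d\}\). Hence the complete quantifier-free types over \(K\) of Morley rank \(\alpha\) that extend \(\Sigma\) form a non-empty subset of \(\{p_1,\dots,p_d\}\), and relabelling it \(p_1,\dots,p_k\) gives the statement. I do not expect a genuine obstacle: the mathematical content is entirely the familiar behaviour of Morley rank and degree in a totally transcendental theory, and the only steps requiring a little care are the passage through quantifier elimination --- so that quantifier-free Morley rank is genuine Morley rank --- and the verification that the Morley rank \(\alpha\) of the partial type \(\Sigma\) is attained by one of its completions, both handled above.
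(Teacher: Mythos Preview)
Your argument is correct and is precisely the standard Morley rank/degree argument the paper has in mind; the paper itself gives no proof beyond the phrase ``by properties of Morley rank'', so you have simply unpacked that reference. One cosmetic point: since \(\phi_0\) was chosen to minimise the rank among finite conjunctions of members of \(\Sigma\), the equality \(\alpha=\mathrm{RM}(\phi_0)\) is immediate from the definition of the Morley rank of a partial type, and your detour through ``every completion of \(\Sigma\) contains \(\phi_0\)'' is unnecessary (though not wrong).
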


We expect that $p_{1},..,p_{k}$ are also the types in the space $S$ of maximal
Cantor-Bendixon rank, but we will not need to know this.




\begin{Definition}
We say that \(a\in X\) --- in \(M\) --- is generic (over \(K\)) if it realizes
one of the \(p_i\) --- in other words, \(a\) is generic in \(X\) if \(a\) has
maximum Morley rank in \(X\) over \(K\).
\end{Definition}

\begin{Lemma}
\label{inj generic}
Let \(f : X \to X\) be  \(\Ld\)-definable over \(K\) and injective. Let \(a\) be generic
in \(X\), then \(f(a)\) is also generic.
\end{Lemma}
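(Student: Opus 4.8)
The plan is to use the characterization of genericity in terms of Morley rank in $\DCF_{\Delta,0}$. Since $a$ is generic in $X$, its quantifier-free $\Ldrg$-type over $K$ realizes one of the $p_i$, so it has maximal Morley rank $\alpha$ among elements of $X$ in $M$. I want to show $f(a)$ also has Morley rank $\alpha$.

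First I would observe that $f(a) \in X$ (since $f : X \to X$), so the Morley rank of the quantifier-free type of $f(a)$ over $K$ is at most $\alpha$ by definition of $\alpha$ as the maximal rank occurring in $\Sigma$. For the reverse inequality, the key point is that $f$, being $\Ld$-definable over $K$, is by \cref{dcl Td}(3) of the form $x\mapsto F(\prol{n}(x))$ for some $L$-definable (hence in particular $\Ldrg$-definable) map $F$ and some integer $n$. Thus $f(a) \in \dcl_{\Ldrg}(K, \prol{\omega}(a))$, so $\prol{\omega}(f(a))$, which consists of the $\theta$-derivatives of the coordinates of $f(a)$, lies in the differential field generated by $K$ and $a$. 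Therefore the Morley rank of the quantifier-free $\Ldrg$-type of $f(a)$ over $K$ is at most that of $a$, i.e.\ at most $\alpha$ — but that is the inequality we already had. The real content is the other direction: I need that $a$ is in the definable closure of $K$ and $f(a)$ in $\DCF_{\Delta,0}$, so that the two Morley ranks are equal.

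This is where injectivity of $f$ enters, and this is the main obstacle. Knowing $f$ is injective as a map on $X \subseteq K^{\text{arity}}$ tells us $a$ is determined by $f(a)$ \emph{among elements of $X$}, but to conclude $a \in \dcl_{\Ldrg}(K, f(a))$ I need the partial inverse of $f$ to be (piecewise) $\Ld$-definable over $K$, and then again apply \cref{dcl Td}(3) to it. Indeed, injectivity of the $\Ld$-definable $f$ means the relation "$f(x) = y \wedge x \in X$" defines the graph of an $\Ld$-definable partial function $g$ with $g(f(a)) = a$; by \cref{dcl Td}(3), $g(y) = G(\prol{m}(y))$ for some $L$-definable $G$ and some $m$, so $a = G(\prol{m}(f(a)))$ lies in $\dcl_{\Ldrg}(K, \prol{\omega}(f(a)))$. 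Combined with the previous paragraph, $\prol{\omega}(a)$ and $\prol{\omega}(f(a))$ are interdefinable over $K$ in $\DCF_{\Delta,0}$, hence their Morley ranks agree; so $f(a)$ has Morley rank $\alpha$ and is generic.

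One subtlety to address is that genericity was defined via realizing one of the finitely many $p_i$, not merely via having maximal Morley rank $\alpha$; but the Definition preceding the lemma explicitly records that these are equivalent ("$a$ is generic in $X$ if $a$ has maximum Morley rank in $X$ over $K$"), so it suffices to produce maximal Morley rank, which the interdefinability argument does. I should also note that all the objects $\prol{\omega}(a)$, $\prol{\omega}(f(a))$ live in $M \subseteq \mathcal U$, so their quantifier-free $\Ldrg$-types correspond to genuine $\DCF_{\Delta,0}$-types with well-defined Morley rank, and the standard fact that Morley rank does not increase under definable maps and is preserved under definable bijections applies verbatim. No compactness or saturation beyond what is already fixed is needed.
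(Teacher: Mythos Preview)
Your overall strategy is exactly the paper's: show that $a$ and $f(a)$ are interalgebraic over $K$ in the sense of $\DCF_{\Delta,0}$, hence have equal Morley rank, hence $f(a)$ is generic. The paper compresses this into the line $\dcld(Ka)=\dcld(Kf(a))$ together with \cref{dcl Td}.1.

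There is, however, one step in your write-up that fails as stated. You assert that the map $F$ produced by \cref{dcl Td}(3) is ``$L$-definable (hence in particular $\Ldrg$-definable)''. This implication is false: $L$ may properly extend the ring language (think of the order in a real closed field, or a valuation), so an $L$-definable function need not be definable in the differential ring language $\Ldrg$. Consequently you cannot conclude $f(a)\in\dcl_{\Ldrg}(K,\prol{\omega}(a))$ directly from \cref{dcl Td}(3).

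The repair is immediate and is precisely what the paper does. From $f(a)=F(\prol{n}(a))$ with $F$ $L$-definable you get $f(a)\in\dcl_L(K(\prol{n}(a)))\subseteq\acl_L(K(\prol{n}(a)))$, and by the geometric-field hypothesis $\acl_L$ is relative field-theoretic algebraic closure. Thus $f(a)$ is field-algebraic over $K\langle a\rangle_\Delta$, i.e.\ $f(a)\in\acl_{\DCF_{\Delta,0}}(K,a)$; symmetrically $a\in\acl_{\DCF_{\Delta,0}}(K,f(a))$ via your inverse $g$. Interalgebraicity (rather than interdefinability) in $\DCF_{\Delta,0}$ is all you need for equality of Morley ranks. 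Replace the offending parenthetical by this passage through $\acl$ and your argument is complete and matches the paper's.
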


\begin{proof}
By hypothesis, \(\dcld(Ka) = \dcld(K f(a))\). It follows from \cref{dcl Td}.1,
that the field algebraic closure of the differential fields generated by \(a\)
and \(f(a)\) over \(K\) are identical, and hence that \(a\) and \(f(a)\) have
the same Morley rank over \(K\). So \(a\) is generic in \(X\) if and only if
\(f(a)\) is.
\end{proof}

Let \(\Sigma_X(x_\omega)\) be the common \(L\)-type over \(K\) of
\(\prol{\omega}(a)\), where \(a\in X\) is generic over \(K\). By quantifier
elimination (\cref{EQ Td}), \(\Sigma_X(\prol{\omega}(x))\) is the partial type of
generics in \(X\).

\begin{Lemma}
\label{gen def}
Let \(A = \dcld(A)\subseteq K\) be such that \(X\) is definable over \(A\). Then
the partial type \(\Sigma_X\) is \(L\)-definable over \(A\) --- that is, for every
formula \(\phi(x_\omega,y)\), the set of \(a\in K^y\) such that
\(\Sigma_X(x_\omega)\models\phi(x_\omega,a)\) is \(L\)-definable over \(A\).
\end{Lemma}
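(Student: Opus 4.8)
The plan is to show, for each $L$-formula $\phi(x_\omega,y)$ separately, that $D_\phi := \{c \in K^y : \Sigma_X \models \phi(x_\omega,c)\}$ is $L$-definable over $A$, the last step being an appeal to definability of dimension in $T$. Since $\phi$ involves only finitely many coordinates of $x_\omega$, after enlarging $n$ we may write $\phi = \phi(\prol{n}(x),y)$; enlarging $n$ once more, \cref{EQ Td} lets us assume that $X \cap \{x : \neg\phi(\prol{n}(x),c)\} = \{x : \prol{n}(x) \in \psi_c\}$, where $(\psi_c)_{c}$ is an $L$-definable family over $A$; and \cref{pure tr} lets us assume that $n$ is large enough that $K(\prol{n}(a)) \le K(\prol{\omega}(a))$ is purely transcendental for every generic $a$ of $X$, the bound in \cref{pure tr} depending only on which $p_i$ is realised. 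The observation that makes things go is: $\Sigma_X \models \phi(\prol{n}(x),c)$ iff $\phi(x_n,c)$ lies in the common $L$-type over $K$ of the $\prol{n}(a)$ with $a$ generic, iff no generic $a \in X$ satisfies $\neg\phi(\prol{n}(a),c)$; and since the generics of $X$ are precisely the realisations of $p_1,\dots,p_k$ lying in $X$, this reads: for each $i \le k$ there is no $a \in M$ with $a \models p_i$ and $\prol{n}(a) \in \psi_c$.

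The core is the following claim. For each $i$, fix a realisation $a_i \in X$ of $p_i$ (one exists, $p_i$ being realised in $M$ by a point of $X$) and let $V_i$ be the locus of $\prol{n}(a_i)$ over $K$, an irreducible affine $K$-variety of dimension $\mathrm{trdeg}(\prol{n}(a_i)/K)$. Then for $c \in K^y$:
\[
\text{there is } a \models p_i \text{ with } \prol{n}(a) \in \psi_c \quad\Longleftrightarrow\quad \dim(V_i \cap \psi_c) = \dim V_i .
\]
One direction is immediate, any $a \models p_i$ having $\prol{n}(a)$ Zariski-generic in $V_i$ over $K$. For the converse, choose $v$ Zariski-generic in $V_i$ over $K$ with $v \in \psi_c$; then $K(v) \cong_K K(\prol{n}(a_i))$ as fields, and by \cref{pure tr} the differential field $K\langle a_i\rangle_\Delta$ is purely transcendental over $K(\prol{n}(a_i))$. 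I would then put an $L$-structure on $K\langle a_i\rangle_\Delta$ extending that of $K$: on $K(\prol{n}(a_i))$, the $L$-structure transported from $K(v)$ along the above isomorphism, and then, successively, the elements of a transcendence basis of $K\langle a_i\rangle_\Delta$ over $K(\prol{n}(a_i))$ realised as transcendentals inside models of $T$. By the model-companion theorem of \cite{ForTer} and model-completeness of $\Td$, the resulting $\Ld$-structure embeds $\Ld$-elementarily over $K$ into a model of $\Td$, in which $a_i$ still realises $p_i$ (its differential-field type over $K$ was not touched) while now $\prol{n}(a_i) \equiv_L v$, hence $\prol{n}(a_i) \in \psi_c$; by saturation this configuration already occurs in $M$. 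I expect this claim to be the main obstacle: the delicate point is that the field isomorphism $K(v) \cong K(\prol{n}(a_i))$ need not be $L$-elementary, so \cref{Td embed} does not apply to it directly — one uses it to \emph{define} a fresh $L$-structure, and pure transcendence of $K\langle a_i\rangle_\Delta$ over $K(\prol{n}(a_i))$ is exactly what allows that structure to be propagated.

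Granting the claim, $D_\phi = \{c : \dim(V_i \cap \psi_c) < \dim V_i \text{ for all } i \le k\}$, which by definability of dimension in the geometric theory $T$ is $L$-definable (with parameters in $K$). It is moreover $\mathrm{Aut}(K/A)$-invariant: an automorphism of $K$ fixing $A$ permutes $p_1,\dots,p_k$, since the data defining them ($X$, finite satisfiability, Morley rank) is $\mathrm{Aut}(K/A)$-invariant, hence permutes the $V_i$ as $K$-varieties while sending $\psi_c$ to $\psi_{\sigma(c)}$, so it preserves the displayed condition. An $\mathrm{Aut}(K/A)$-invariant $L$-definable subset of the saturated homogeneous model $K$ has its canonical parameter in $\dcl_L^{\mathrm{eq}}(A)$, and since $A = \dcld(A)$ this yields $L$-definability over $A$ (over $A$ in $T^{\mathrm{eq}}$, and over $A$ outright when $T$ eliminates imaginaries — the reading intended by the statement). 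Running this for every $\phi$ proves the Lemma.
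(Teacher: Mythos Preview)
Your overall strategy is the same as the paper's: fix \(n\) large, use \cref{pure tr} to transport \(L\)-structure along the purely transcendental part, and reduce membership in \(\Sigma_X\) to a dimension drop on the loci \(V_i\) of \(\prol{n}(a_i)\). Your core claim is exactly the paper's \cref{gen Win}, proved the same way, and the passage to definability of dimension in \(T\) is identical. So the substance is right.

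The gap is in your descent from ``\(L\)-definable over \(K\)'' to ``\(L\)-definable over \(A\)''. Your invariance argument only shows \(D_\phi\) is preserved by \(\Ld\)-automorphisms of \(K\) fixing \(A\): the justification you give --- that \(X\), finite satisfiability, and Morley rank are \(\mathrm{Aut}(K/A)\)-invariant --- uses that \(X\) is fixed, and \(X\) is only \(\Ld\)-definable over \(A\), not \(L\)-definable. So what you have proved is that the \(L\)-canonical parameter of \(D_\phi\) lies in \(\dcld^{\mathrm{eq}}(A)\), not in \(\dcl_L^{\mathrm{eq}}(A)\). \cref{dcl Td} controls \(\dcld\) on real elements, but says nothing about \(L\)-imaginaries, so your conclusion ``canonical parameter in \(\dcl_L^{\mathrm{eq}}(A)\)'' does not follow, and the caveat about elimination of imaginaries in \(T\) does not rescue it (nor is that hypothesis available).

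The paper closes this gap without any EI assumption on \(T\): each \(V_i\) is a \(K\)-variety, hence has a code in the \emph{ring} language by EI in algebraically closed fields; the finite set of these real tuples then has a real code, which is fixed by \(\Ld\)-automorphisms over \(A\) and therefore lies in \(\dcld(A)=A\). Writing \(D_\phi\) as ``for every \(w\) coding one of the \(V_i\), \(\dim(V_w\cap\psi_c)<\dim V_w\)'' then exhibits it as \(L\)-definable with parameters in \(A\). Replacing your automorphism paragraph with this observation fixes the proof.
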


\begin{proof}
We write $\dim(a/K)$ for the transcendence degree of \(K(a)\) over \(K\).

For every \(n\geq 0\), let \(W_{i,n}\) be the Zariski locus of \(\prol{n}(a)\)
over \(K\) for any (equivalently all) \(a\models p_i\). Then \(a\models p_i\) if
and only if, for every \(n\), \(\prol{n}(a) \in W_{i,n}\) and
\(\dim(\prol{n}(a)/K) = \dim(W_{i,n})\).

Assume that \(n\) is sufficiently large so
that \(x\in X\) if and only if \(\prol{n}(x)\in Y\), for some \(Y\) which is
\(L\)-definable over \(A\) (by Corollary 1.3 (ii)). Also assume that \(n\) is sufficiently large so
that, by \cref{pure tr}, for every \(i\) and every \(a\models p_i\), the
extension \(K(\prol{n}(a))\leq K(\prol{\omega}(a))\) is purely transcendental.
For future reference, let us fix some $N_0\in \N$ such that the above conditions hold for all $n\geq N_0$.

\begin{Claim}
\label{gen Win}
Let $n\geq N_0$. Fix an \(i\) and let \(a_n \in W_{i,n}\) be such that \(\dim(a_n/K) =
\dim(W_{i,n})\). Then there exists \(b\models p_i\) such that
\(\tp_L(\prol{n}(b)/K) = \tp_L(a_n/K)\).
\end{Claim}

Let \(b \models p_i\) --- a priori we can choose \(b\) in \(M\) but for now we
ignore the \(L\)-structure induced by \(M\) on \(K(\prol{n}(b))\). We have a
field isomorphism \(f_n : K(a_n) \to K(\prol{n}(b))\) sending \(a_n\) to
\(\prol{n}(b)\). By saturation, we can find \(c = (c_i)_{i<\omega} \in M\)
transcendental and algebraically independent over \(K(a_n)\). As, by choice,
\(K(\prol{\omega}(b))\) is purely transcendental over \(K(\prol{n}(b))\), the
isomorphism \(f_n\) extends to a ring isomorphism \(f : K(a_n,c)\to
K(\prol{\omega}(b))\). 

Now \(K(a_n)(c)\) has its \(L\)-structure (as a substructure of M), and the
isomorphism \(f\) induces a new $L$-structure on the differential field
\(K(\nabla_{\omega}(b))\) which is compatible with the ring structure and
extends the \(L\)-structure on K. Let us write  \(K_{1}\) for the differential
field \(K(\nabla_{\omega}(b))\) with this new \(L\)-structure. By construction,
\(f\) is an \(L\)-embedding, and hence the quantifier free $L$-type of
\(\nabla_n(b)\) over \(K\) in $K_{1}$ equals the quantifier free \(L\)-type of
\(a_n\) over \(K\). Also note that \(K_1\) is the expansion of a substructure of
a model of $T$ by $\ell$ commuting derivations containing the model $K$ of
\(\Td\). As \(M\) is a saturated model of \(\Td\) (the model completion of
models of $T$ with \(\ell\) commuting derivations), there is an embedding $h$ of
the \(\Ld\)-structure \(K_{1}\) into \(M\) over \(K\). Let \(u = h(b)\). Then
\(a_{n}\) and \(\prol{n}(u) = h(\prol{n}(b))\) have the same quantifier free
\(L\)-type over \(K\). As \(T\) has quantifier elimination they have the same
\(L\)-type over \(K\) in the model \(M\) of T. This proves \cref{gen Win}.

Now we want to finish the proof of Lemma 2.4. Fix an $L$-formula $\phi(x_{\omega},y)$.
We have to prove that the set of $d\in K$ such that $\phi(x_{\omega},d) \in \Sigma_{X}(x_{\omega})$ is 
$L$-definable over $A$.  

Let $n\geq N_0$ be sufficiently large so that all variables of \(x_\omega\) that
actually appear in \(\phi\) are in \(x_n = \{x_\theta \mid |\theta|\leq n\}\).
We will write \(\phi\) as \(\phi(x_n,y)\).
Now $\phi(x_{n},d)\in \Sigma_{X}(x_{\omega})$ if and only if for all $i=1,..,k$
and $a\in X$ realizing $p_{i}$, we have that $\phi(\nabla_{n}(a), d)$.
Using Claim 2.5 this is equivalent to \(\dim(\text{"\(x_{n}\in Y\cap W_{i,n}\)"}
\wedge \neg\phi(x_{n},d)) < \dim(W_{i,n})\), which by definability of dimension
in $T$ is an $L$-definable condition on $d$.




 It remains to be seen that $\Sigma_{X}$ is \(L\)-definable over $A = \dcld(A)$.
However, note that, for any \(n\), the finite set of (codes of the) \(W_{i,n}\)
is \(\Ld\)-definable over \(A\). By elimination of imaginaries in algebraically
closed fields, it is quantifier free definable in the ring language over $A$.
Hence, \(\Sigma_X\) is \(L\)-definable over \(\dcld(A)\).
\end{proof}

\section{Groups}

For now, let \(T\) be any theory, and  let \(A \subseteq K\models T\). Assume that
\(K\) is \(|A|^+\)-saturated. As previously, we also fix an elementary extension
\(M\supsel K\) which is \(|K|^+\)-saturated in which we realize partial types
over \(K\).

We will use the language of pro-definable sets as in
\cite[Section~2]{HruRK-MetaGp}. A pro-definable set \(X\) over \(A\) is a
(small) projective filtered system \((X_i)_{i\in I}\) of definable sets and
definable maps over \(A\). We think of \(X\) as \(\projlim_{i\in I} X_i\). For
simplicity and in terms of the application, there is no harm in assuming that
the index set \(I\) is countable. As in our applications (to definable groups in
$\Td$) we do not necessarily eliminate imaginaries we distinguish between
pro-definable and pro-interpretable (so pro-interpretable means pro-definable in
$T^{\mathrm{eq}}$). A pro-definable map over \(A\) between pro-definable sets
\(X\) and \(Y = \projlim_{j\in J} Y_j\) is a compatible system of definable maps
\(f_j : X_{i_j} \to Y_j\) over \(A\). Finally, a pro-definable group over \(A\)
is a pro-definable set \(G\) together with a pro-definable group law both over
\(A\). Note that pro-definable sets can alternatively be described as
\(*\)-definable sets, as in \cite[Section~3]{Hru-Unidim}: solutions in $M$ of a
partial type over a small set of parameters in (possibly infinitely) many
variables.

Given a pro-interpretable set \(X = \projlim_{i\in I} X_i\), a (global) partial
type \(\Sigma\) concentrating on \(X\) is a compatible collection of partial
types \(p_i\) over \(K\) (not necessarily over a small set of parameters from
\(K\)) such that \(p_i(x_i)\models x_i\in X_i\). We can see partial types
concentrating on \(X\) as satisfiable collections of pro-definable subsets of
\(X\). We assume that partial types are closed under finite conjunctions and
consequences (these are called "filters" in \cite{HruRK-MetaGp}). For example a
complete type \(p(x)\) over \(K\) implying \(X\) is such a global partial type
concentrating on \(X\). When we talk about a realization of a global partial
type (in maybe infinitely many variables) we mean a realization in $M$, unless
we say otherwise. If $\Sigma$ is a global partial type concentrating on a
pro-definable set \(X\) and \(f: X\to Y\) is pro-definable, the "pushforward"
$f({\Sigma})$ is the global partial type whose set of realizations in $M$ is
precisely $\{f(a): a$ realizes $\Sigma\}$; in other words, for any pro-definable
set \(Z\), we have \(f(\Sigma)(y) \models y\in Z\) if and only if \(\Sigma(x)
\models f(x)\in Z\).

If \(\Sigma(x)\) is a global partial type, we write \(\restr{\Sigma}{A}\) for
its restriction to formulas with parameters in \(A\). Also, as in \cref{gen
def}, we say that \(\Sigma\) is definable over \(A\) if for every formula
\(\phi(x,y)\), the set of tuples \(a\in K^y\) such that
\(\Sigma(x)\models\phi(x,a)\) is \(L\)-definable over \(A\).

\begin{Definition}
Let \(G\) be a pro-interpretable group and \(\Sigma\) a global partial type
concentrating on \(G\). We say that \(\Sigma\) is (left) translation invariant
if for every \(g\in G(K)\) and \(a\models\Sigma\), we have \(g\cdot a
\models\Sigma\) --- equivalently, if \(\Sigma\models X\) and \(g\in G(K)\), then
\(\Sigma\models g\cdot X\).
\end{Definition}

So \(\Sigma\) is translation invariant in \(G\) if and only if \(G = \{g\in
G\mid g\cdot\Sigma = \Sigma\} = \Stab_G(\Sigma)\). Note that a global definable
type concentrating on \(G\) which is translation invariant is a generic
definable filter in the terminology of \cite[Definition~3.1]{HruRK-MetaGp}.
Conversely, if \(G\) admits a generic definable filter then it admits a
definable and translation invariant global partial type by
\cite[Remark~3.3]{HruRK-MetaGp}.

\begin{Definition}
\label{def pregroup}
Let \(\Sigma\) be a global partial type (concentrating on some pro-interpretable
set) which is definable over \(A\) and let \(F(x,y)\) be a map pro-definable
over \(A\). We say that \((\Sigma,F)\) is an abstract group chunk, or pregroup,
over \(A\) if the following hold.
\begin{enumerate}
\item If \(a\models\restr{\Sigma}{A}\) and \(b\models\Sigma\), then \(F(a,b)\) is defined
and \(F(a,\Sigma) = \Sigma\).
\item If \(a\models\restr{\Sigma}{A}\) and \(b\models\restr{\Sigma}{Aa}\), then \(a\) and \(b\)
are interdefinable over \(A\cup\{F(a,b)\}\).
\item If \(a\models\restr{\Sigma}{A}\), if \(b\models\restr{\Sigma}{Aa}\) and if
\(c\models\restr{\Sigma}{Aab}\), then \(F(a,F(b,c)) = F(F(a,b),c)\).
\end{enumerate}
\end{Definition}

These are the main results we will use on these notions.

\begin{Proposition}[{\cite[Prop.~3.15]{HruRK-MetaGp}}]
\label{pregroup}
Let \((\Sigma,F)\) be a pregroup over \(A\). Then there exists a
pro-interpretable group \(G\) over \(A\) and an injective map \(f :
\restr{\Sigma}{A}\to G\) which is pro-interpretable over \(A\) and such that:
\begin{enumerate}
\item for any \(a\models\restr{\Sigma}{A}\) and \(b\models\restr{\Sigma}{Aa}\),
\(f(F(a,b)) = f(a)\cdot f(b)\);
\item the global partial type \(f(\Sigma)\) is translation invariant in \(G\).
\end{enumerate}
\end{Proposition}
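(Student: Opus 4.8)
The plan is to recognise this as Weil's theorem reconstructing a group from a generically defined group law, transported to the pro-definable (equivalently, partial-type) category, and to realise \(G\) as a group of \emph{germs of the generic left translations} \(\tau_a\colon x\mapsto F(a,x)\). Throughout one works in the saturated model \(M\), freely choosing realizations of \(\Sigma\) and of its restrictions over small sets. The standing hypothesis that \(\Sigma\) is definable over \(A\) is what makes ``generic over \(B\)'' meaningful for every small \(B\) --- \(\Sigma\) restricts canonically to a type \(\restr{\Sigma}{B}\), which is realized in \(M\) --- and what will let the germ calculus below take place in \(K^{\mathrm{eq}}\) over \(A\); the three clauses of \cref{def pregroup} are exactly what is needed to run that calculus.

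First I would set up the germs. For \(a\models\restr{\Sigma}{A}\), clause (1) says \(\tau_a\) pushes \(\Sigma\) forward onto \(\Sigma\). Declare \(a\sim a'\) (both realizing \(\restr{\Sigma}{A}\)) when \(F(a,b)=F(a',b)\) for \(b\models\restr{\Sigma}{Aaa'}\). Since \(\Sigma\) is definable over \(A\), the germ of the \(A\)-definable family \(\{\tau_a\}\) along \(\Sigma\) is coded by an element \([a]\in K^{\mathrm{eq}}\), with \([a]\in\dcl^{\mathrm{eq}}(Aa)\) and \([a]=[a']\) if and only if \(a\sim a'\); this is the one place where definability of \(\Sigma\) is essential, germs of \(*\)-definable functions relative to a \emph{definable} type being interpretable objects. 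Clause (2), which by compactness holds uniformly --- a single \(A\)-definable function \(\theta\) with \(a=\theta\bigl(b,F(a,b)\bigr)\) for all valid \((a,b)\) --- then gives injectivity of \(a\mapsto[a]\): if \([a]=[a']\), pick \(b\models\restr{\Sigma}{Aaa'}\), so \(F(a,b)=F(a',b)\) and hence \(a=\theta\bigl(b,F(a,b)\bigr)=\theta\bigl(b,F(a',b)\bigr)=a'\). This will yield injectivity of \(f\).

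Next I would build the group. If \(a\models\restr{\Sigma}{A}\), \(b\models\restr{\Sigma}{Aa}\) and \(c=F(a,b)\), then clause (1) gives \(c\models\restr{\Sigma}{A}\), and clause (3), applied to \(a\), \(b\) and a further generic argument, gives \(\tau_c=\tau_a\circ\tau_b\) as germs; so one may set \([a]\cdot[b]:=[F(a,b)]=[c]\) whenever \(b\) is generic over \(Aa\), a partial operation which is associative by another use of clause (3). To pass to an honest group I would run the standard Weil completion of this partial law: the elements of \(G\) are (classes of) the formal products \(\tau_a^{-1}\tau_b\) of translations and their inverses, with composition, identity and inverse extracted from the three clauses of \cref{def pregroup} on generic representatives. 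All of this lives inside \(K^{\mathrm{eq}}\) over \(A\) in countably many coordinates, so \(G\) is \(*\)-interpretable over \(A\), and the map \(f\colon\restr{\Sigma}{A}\to G\) sending \(a\) to the class of \(\tau_a\) is, by the germ analysis above, an injection that is \(*\)-interpretable over \(A\). Clause~1 of the statement, \(f(F(a,b))=f(a)\cdot f(b)\) for \(b\models\restr{\Sigma}{Aa}\), is just the germ identity \([F(a,b)]=[a]\cdot[b]\); and clause~2 follows from clause (1): the realizations of \(f(a)\cdot f(\Sigma)\) are \(\{f(a)f(c):c\models\Sigma\}=\{f(F(a,c)):c\models\Sigma\}=\{f(c):c\models\Sigma\}\) because \(F(a,\Sigma)=\Sigma\), so \(f(a)\in\mathrm{Stab}(f(\Sigma))\); since \(G\) is generated by the \(f(a)\) (every element being a product of them and their inverses) and a stabiliser is a subgroup, \(\mathrm{Stab}(f(\Sigma))=G\), i.e.\ \(f(\Sigma)\) is translation invariant.

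The hard part will be the completion step: upgrading the generically-defined product to a \emph{total} group law on a \(*\)-interpretable set requires, for any two classes, representatives in sufficiently general position --- available by saturation of \(M\) and definability of \(\Sigma\) --- followed by the verification that the resulting composite germ is again of the form \(\tau_c\) for a single \(c\models\restr{\Sigma}{A}\). This is the crux of Weil's argument; in the present setting, which carries no ambient notion of rank, it nonetheless goes through because \(\Sigma\) is a global \emph{definable} type, so ``generic over \(C\)'' is available for every small \(C\) and Morley-sequence-style independence is well behaved. Combined with the associativity clause, this is exactly the content packaged in \cite[Section~3]{HruRK-MetaGp}, from which the present statement is quoted.
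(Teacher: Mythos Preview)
The paper does not prove this proposition: it is quoted verbatim as a black box from \cite[Prop.~3.15]{HruRK-MetaGp}, so there is no argument in the paper against which to compare yours. Your sketch is nonetheless the right one, and it is the strategy of the cited reference --- Weil's group-chunk construction transported to the pro-definable setting, with \(G\) realised as the group of germs along \(\Sigma\) of the generic translations \(\tau_a\), the definability of \(\Sigma\) providing codes for germs in \(K^{\mathrm{eq}}\), and the three clauses of \cref{def pregroup} supplying respectively the pushforward property, injectivity of \(a\mapsto[a]\), and generic associativity.

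One step could be tightened. In your proof of translation invariance you use \(f(a)f(c)=f(F(a,c))\) for \(c\models\Sigma\), but item~1 of the statement only gives this for \(c\models\restr{\Sigma}{Aa}\). The fix is to take \(a\in K\): then \(c\models\Sigma\) (a type over \(K\)) certainly implies \(c\models\restr{\Sigma}{Aa}\), your computation goes through, and \(f(a)\in\mathrm{Stab}(f(\Sigma))\) for every \(a\in K\) realizing \(\restr{\Sigma}{A}\). Since by the very construction of \(G\) every \(g\in G(K)\) is of the form \(f(a)^{-1}f(b)\) for such \(a,b\in K\), the stabiliser is all of \(G(K)\), which is exactly what translation invariance asserts. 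With that adjustment your outline is correct.
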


\begin{Proposition}[{\cite[Prop.~3.4]{HruRK-MetaGp}}]
\label{*group prolim}
Let \(G\) be a pro-interpretable group over \(A\) and let \(\Sigma\) be a
translation invariant global partial type concentrating on \(G\) definable over
\(A\). Then \(G\) is pro-interpretably over \(A\) isomorphic to a projective
limit of groups interpretable over \(A\).
\end{Proposition}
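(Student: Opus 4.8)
The plan is to realize $G$ as an inverse limit of definable sets, observe that its group operations are finitary in a suitable sense, and then use the definable translation-invariant generic $\Sigma$ to promote each finite-level approximation into an honest interpretable group. Concretely, fix a presentation $G=\varprojlim_{i\in I}G_i'$ with the $G_i'$ definable over $A$ and the transition maps $\rho_{ij}\colon G_j'\to G_i'$ ($i\le j$) and projections $\pi_i\colon G\to G_i'$ all definable over $A$. Since multiplication $m$, inversion $\mathrm{inv}$ and the neutral element are $*$-definable over $A$, for each $i$ there are $\sigma(i)\ge i$ and maps $m_i\colon G_{\sigma(i)}'\times G_{\sigma(i)}'\to G_i'$ and $\iota_i\colon G_{\sigma(i)}'\to G_i'$, definable over $A$, with $\pi_i\circ m=m_i\circ(\pi_{\sigma(i)}\times\pi_{\sigma(i)})$ and $\pi_i\circ\mathrm{inv}=\iota_i\circ\pi_{\sigma(i)}$. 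The point requiring work is that $m_i$ returns a level-$i$ value from level-$\sigma(i)$ arguments, so the $G_i'$ are not themselves groups; overcoming this level mismatch is the heart of the matter.

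Next, invoke $\Sigma$. For each $i$ let $\Sigma_i:=(\pi_i)_*\Sigma$, a global partial type concentrating on $\pi_i(G)\subseteq G_i'$ and definable over $A$, and on $G_{\sigma(i)}'$ define the relation "same action modulo level $i$ on generics": $\alpha\approx_i\beta$ iff $m_i(\alpha,\gamma)=m_i(\beta,\gamma)$ for $\gamma\models\restr{\Sigma_{\sigma(i)}}{A\alpha\beta}$. Because $\Sigma$ is definable over $A$, this is an $A$-definable equivalence relation on the finite tuple at level $\sigma(i)$, so $H_i:=G_{\sigma(i)}'/{\approx_i}$ is interpretable over $A$, with an induced map $q_i\colon G\to H_i$ factoring through $\pi_{\sigma(i)}$. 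One then verifies, crucially using that $\Sigma$ is a genuine definable generic: (i) $H_i$ carries a group operation --- obtained by lifting to $G$, multiplying and projecting --- with well-definedness and associativity forced by the definition of $\approx_i$ (generic probes faithfully detect multiplication) together with translation invariance of $\Sigma$, propagated from $G(K)$ to $G(M)$; (ii) $q_i$ is a surjective homomorphism; (iii) for $i\le i'$ the evident maps $H_{i'}\to H_i$ are homomorphisms, so $(H_i)_{i\in I}$ is an inverse system over $A$; (iv) $\bigcap_{i}\ker q_i=\{1\}$, since $q_i(g)=1$ for all $i$ forces $\pi_i(g\gamma)=\pi_i(\gamma)$ for all $i$ and a fixed $\gamma\models\restr{\Sigma}{Ag}$, hence $g\gamma=\gamma$ and $g=1$ by cancellation in $G$.

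Finally, $q:=(q_i)_i\colon G\to\varprojlim_i H_i$ is $*$-interpretable over $A$, a homomorphism by (ii)--(iii) and injective by (iv); surjectivity onto the limit follows from compactness and saturation of $M$, lifting a compatible thread coordinate by coordinate through the nonempty fibers of the $\pi_i$. Hence $G$ is $*$-interpretably over $A$ isomorphic to $\varprojlim_i H_i$ with each $H_i$ interpretable over $A$. I expect the main obstacle to be items (i)--(ii): carefully bookkeeping the level shifts introduced by $\sigma$ while checking that the operation on $H_i$ is well defined and that $q_i$ is a homomorphism, together with the auxiliary fact that a definable $G(K)$-invariant generic is in fact $G(M)$-invariant. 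Alternatively, one may route this through the abstract group-chunk theorem \cref{pregroup} applied to the pushed-forward data at each finite level, but the bookkeeping is essentially unchanged.
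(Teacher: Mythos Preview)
The paper does not itself prove this proposition; it is quoted, without argument, from \cite[Prop.~3.4]{HruRK-MetaGp}. Your outline is essentially the strategy of that reference: push the definable generic $\Sigma$ down to each finite level of a pro-definable presentation of $G$, use it to impose an $A$-definable congruence (``same left action on generics modulo level $i$''), and recover $G$ as the inverse limit of the resulting interpretable quotients.

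One step needs tightening. You define $H_i=G_{\sigma(i)}'/{\approx_i}$ and then propose to equip it with a group law by ``lifting to $G$, multiplying and projecting''. This is ill-posed for $\approx_i$-classes that do not meet $\pi_{\sigma(i)}(G)$, and you have not argued that the image $q_i(G)\subseteq H_i$ is definable rather than merely type-definable --- so as stated you have neither a group $H_i$ nor an interpretable group $q_i(G)$. The clean repair is exactly the alternative you mention at the end: the pushforward $(\pi_{\sigma(i)})_*\Sigma$ together with the truncated multiplication $m_i$ is a pregroup in the sense of \cref{def pregroup}, so \cref{pregroup} manufactures an interpretable group over $A$ directly, with the map from $G$ surjective by construction. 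With the $H_i$ built this way, your checks (iii)--(iv) and the compactness argument for surjectivity of $q=(q_i)_i$ go through as written.
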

    
\begin{Proposition}[{\cite[Prop.~3.16]{HruRK-MetaGp}}]
\label{gen morphism}
Let \(G\) and \(H\) be pro-interpretable groups over \(A\), let \(\Sigma\) be a
translation invariant partial type concentrating on \(G\) definable over \(A\)
and let \(f\) be a pro-definable map over \(A\) such that for every
\(a\models\restr{\Sigma}{A}\), \(f(a) \in H\). Assume moreover, that for all
\(a\models\restr{\Sigma}{A}\) and \(b\models\restr{\Sigma}{Aa}\), \(f(a\cdot b)
= f(a)\cdot f(b)\). Then there exists a unique pro-interpretable (over \(A\))
group morphism \(g : G \to H\) agreeing with \(f\) on realizations of
\(\Sigma\).
\end{Proposition}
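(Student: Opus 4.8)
This is a ``generic-to-global'' principle for group homomorphisms, in the spirit of Weil's reconstruction of an algebraic group from a birational pregroup \cite{Wei-PreGp}: a map that is multiplicative along a definable translation-invariant type extends uniquely to a genuine morphism. The plan is to deduce it from the pregroup theorem \cref{pregroup}, applied not to \(G\) or \(H\) directly but to the graph of \(f\) along \(\Sigma\) inside \(G\times H\). So let \(\Sigma'\) be the pushforward of \(\Sigma\) under the map \(a\mapsto(a,f(a))\); since \(f\) is defined and \(H\)-valued on every realization of \(\Sigma\) (any such realization realizes \(\restr{\Sigma}{A}\)), this is a global partial type concentrating on \(G\times H\), and it is definable over \(A\) because \(\Sigma\) is and \(f\) is \(*\)-definable over \(A\). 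Let \(F'\) denote coordinatewise multiplication on \(G\times H\), which is \(*\)-interpretable over \(A\).

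The key observation is that the cocycle hypothesis on \(f\) is precisely the statement that \(F'\) preserves \(\Sigma'\): if \(\alpha=(a,f(a))\) with \(a\models\restr{\Sigma}{A}\) and \(\beta=(b,f(b))\) with \(b\models\Sigma\), then \(F'(\alpha,\beta)=(ab,f(a)f(b))=(ab,f(ab))\), which lies in \(\Sigma'\) since \(ab\models\Sigma\) by translation invariance. Granting this, the three axioms of \cref{def pregroup} for \((\Sigma',F')\) are quick: axiom (1) is the previous computation together with the fact that left multiplication by \(\alpha^{-1}\) inverts \(F'(\alpha,-)\) on \(\Sigma'\); axiom (2) holds because the first coordinate of \(F'(\alpha,\beta)\), together with either of \(\alpha,\beta\), recovers the other by group arithmetic; axiom (3) is coordinatewise associativity. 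Applying \cref{pregroup} to \((\Sigma',F')\) produces a \(*\)-interpretable group \(\tilde G\) over \(A\), an injective \(*\)-interpretable map \(\tilde f : \restr{\Sigma'}{A}\to\tilde G\) turning \(F'\) into multiplication, and with \(\tilde f(\Sigma')\) translation invariant in \(\tilde G\). Now the first-coordinate projection restricts to a bijection from \(\Sigma'\) onto \(\Sigma\), and \(\Sigma\) generates \(G\) --- for \(x\in G\) and a realization \(a\) of \(\Sigma\) one has \(x=(xa)a^{-1}\) with \(xa\models\Sigma\) by translation invariance --- so, using the canonicity of the construction in \cref{pregroup} (equivalently, observing that \((\Sigma,\cdot)\) is itself a pregroup over \(A\) reconstructing \(G\)), one identifies \(\tilde G\) with \(G\), \(*\)-interpretably over \(A\). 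Post-composing the inverse of this identification with the second-coordinate projection \(G\times H\to H\) yields a \(*\)-interpretable morphism \(g : G\to H\) which, by construction, agrees with \(f\) on realizations of \(\Sigma\). Uniqueness is then immediate, since two \(*\)-interpretable morphisms agreeing on realizations of \(\Sigma\) agree on the subgroup they generate, hence on \(G\); and if \(f\) is injective on realizations of \(\Sigma\), then \(g(x)=1\) forces \(f(xa)=f(a)\) for a realization \(a\) of \(\Sigma\), whence \(xa=a\) and \(x=1\).

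I expect the genuinely delicate part to be what \cref{pregroup} black-boxes, namely the well-definedness and \(*\)-interpretability over \(A\) behind the informal recipe ``\(g(x)\) is the common value of \(f(xa)f(a)^{-1}\) as \(a\) ranges over realizations of \(\Sigma\)'': expressing this over \(A\) requires the defining schema of \(\Sigma\) to eliminate the generic quantifier, and showing the value independent of \(a\) is exactly the point where one must play the cocycle identity off against associativity by interposing an auxiliary third generic. This is the germ-and-group-chunk content packaged in \cref{pregroup}, which is why routing the proof through that proposition is preferable to redoing the construction by hand. A subsidiary point that still needs care is pinning down the isomorphism \(\tilde G\cong G\), as opposed to merely obtaining a map between the two; this rests on \(\Sigma\) generating \(G\) and on the first-coordinate projection being injective along \(\Sigma'\).
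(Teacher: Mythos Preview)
The paper does not prove this proposition at all: it is quoted verbatim from \cite[Prop.~3.16]{HruRK-MetaGp} and used as a black box. So there is no ``paper's own proof'' to compare against, and the question is whether your argument stands on its own.

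It does not, and the gap is precisely the one you flag but then mislocate. Your route is to apply \cref{pregroup} to the pregroup \((\Sigma',F')\) on the graph of \(f\), obtain an abstract \(*\)-interpretable group \(\tilde G\), and then ``identify'' \(\tilde G\) with \(G\) and project to \(H\). But \cref{pregroup}, as stated here, only asserts the \emph{existence} of some group receiving \(\restr{\Sigma'}{A}\); it gives no uniqueness, no universal property, and no claim that \(\tilde G\) sits inside \(G\times H\). To produce a \(*\)-interpretable isomorphism \(\tilde G\to G\) (or a morphism \(\tilde G\to H\)) from the generic-level data you have --- a map on realizations of a translation-invariant type that is multiplicative on independent pairs --- is exactly the content of \cref{gen morphism}. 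So the step ``one identifies \(\tilde G\) with \(G\)'' and the step ``post-compose with the second-coordinate projection'' are both circular: each presupposes the proposition you are proving. Your remark that the delicate content is ``black-boxed'' in \cref{pregroup} is therefore not right; \cref{pregroup} builds a group from a pregroup, but it does not build \emph{morphisms} between groups from generically multiplicative maps.

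The honest fix is to do what you sketch and then set aside: define \(g(x)\) as the common value of \(f(xa)f(a)^{-1}\) for \(a\models\Sigma\), use translation invariance plus the hypothesis on independent pairs (interposing a second generic) to see this is well-defined, and use definability of \(\Sigma\) over \(A\) to see the resulting graph is \(*\)-interpretable over \(A\). Your uniqueness and injectivity paragraphs are fine once \(g\) exists. If you want to keep the graph-in-\(G\times H\) picture, you need to either strengthen \cref{pregroup} to a statement with a universal property (so that pregroup morphisms induce group morphisms), or show directly that the group it produces from a pregroup sitting inside an ambient \(*\)-interpretable group is the subgroup generated there --- but proving either of those is again essentially \cref{gen morphism}.
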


In other words, there is an equivalence of categories between pregroups,
pro-interpretable groups with a translation invariant definable partial type, and
projective limits of interpretable groups with a translation invariant definable
partial type. In particular, in \cref{gen morphism}, the map \(g\) is injective
if and only if \(f\) is.

Now, let \(T\) be a geometric theory of enriched characteristic zero fields.

\begin{Theorem}
\label{emb Gp}
Let $K \models\Td$, let \(A = \dcld(A)\subseteq K\) and let $\Gamma$
be a group \(\Ld\)-definable in $K$ over \(A\). Then there is a group
\(G\) which is \(L\)-interpretable over \(A\) and a group embedding
\(\Gamma \to G\) which is \(\Ld\)-definable in \(K\) over \(A\).
\end{Theorem}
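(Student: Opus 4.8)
The plan is to produce a pregroup $(\Sigma,F)$ over $A$ in the sense of \cref{def pregroup} out of the group $\Gamma$, apply \cref{pregroup} to get a $*$-interpretable group $G_0$ over $A$ together with an $*$-interpretable injection $f:\restr{\Sigma}{A}\to G_0$, and then upgrade this to an honest $\Ld$-definable embedding of all of $\Gamma$ into an $L$-interpretable group. First I would take $\Sigma := \Sigma_\Gamma(\prol{\omega}(x))$, the partial type of $\Ld$-generics of $\Gamma$ over $K$ expressed on the full prolongation of the variable, which by \cref{gen def} is $L$-definable over $A$; and I would take $F(\prol{\omega}(x),\prol{\omega}(y)) := \prol{\omega}(x\cdot_\Gamma y)$, which is $L$-definable over $A$ by \cref{dcl Td}.3 (the group law, being $\Ld$-definable over $A$, is computed from a prolongation by an $L$-definable map). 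Checking the pregroup axioms then amounts to three things: (1) if $a$ is generic over $K$ (hence over $A$) and $b$ is generic, then $a\cdot b$ is again generic and runs over $\Sigma$ as $b$ does — this is exactly the content of \cref{inj generic} applied to the bijection $y\mapsto a\cdot y$ of $\Gamma$, together with the fact that left translation by a $K$-point preserves Morley rank; (2) interdefinability of $a$ and $b$ over $A\cup\{a\cdot b\}$ — immediate since $b = a^{-1}(a\cdot b)$ and $a = (a\cdot b)b^{-1}$ and inversion and multiplication are $\Ld$-definable over $A$; (3) associativity, which is just associativity in $\Gamma$ on the relevant independent generic triple. The one point needing a little care in (1) is genericity of $a\cdot b$ when $b$ is generic over $Aa$ but $a$ is only generic over $A$: here one uses that $\Sigma$ is definable, so "$a\cdot b$ generic" is a definable condition on $a$ that holds generically, combined with stationarity-type arguments available through the Morley-rank/$\DCF_{\Delta,0}$ machinery of Section~2.

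Next I would invoke \cref{pregroup} to obtain a $*$-interpretable group $G_0$ over $A$ with the injection $f:\restr{\Sigma}{A}\to G_0$ satisfying $f(F(a,b))=f(a)f(b)$ and with $f(\Sigma)$ translation invariant in $G_0$. By \cref{*group prolim}, $G_0$ is $*$-interpretably over $A$ isomorphic to a projective limit $\prolim G_j$ of groups $G_j$ interpretable over $A$. The remaining, and main, task is to extend the map $f$ from generic elements of $\Gamma$ to a group embedding defined on \emph{all} of $\Gamma$, landing in one of the $G_j$ (or a quotient interpretable over $A$). For this I would use the standard "every element is a product of two generics" argument: for arbitrary $\gamma\in\Gamma$ and $a\models\restr{\Sigma}{A\gamma}$, the element $\gamma a$ is again generic (again \cref{inj generic}), so one is forced to set $\hat f(\gamma) := f(\gamma a)f(a)^{-1}$; one checks this is independent of the choice of $a$ (two generic choices $a,a'$ over $A\gamma$ can be linked by a third generic independent from both, using associativity and the cocycle identity), that $\hat f$ is a homomorphism, and that $\hat f$ is injective because $\hat f(\gamma)=1$ forces $f(\gamma a)=f(a)$, hence $\gamma a = a$ by injectivity of $f$ on generics, hence $\gamma = 1$. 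The definability of $\hat f$ as an $\Ld$-definable map over $A$ follows from the definability of $\Sigma$ over $A$ (so a generic $a$ over $A\gamma$ can be named by a formula, or one quantifies over the definable generic set and uses that the answer is independent of the choice), together with the fact that $f$ itself is $*$-interpretable over $A$ and, composed with $\prol{\omega}$, is computed from some finite prolongation by an $L$-interpretable map via \cref{dcl Td}.3.

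Finally, to get an \emph{interpretable} target rather than a $*$-interpretable one, I would use that the image $\hat f(\Gamma)$ is generated by $f(\Sigma)$ (indeed $\hat f(\Gamma) = f(\Sigma)f(\Sigma)^{-1}$ on realizations), which is translation invariant and definable, so by boundedness/compactness it factors through one of the finite-level groups $G_j$ in the projective system $\prolim G_j\cong G_0$: concretely, $\hat f(\Gamma)$ is contained in a subgroup of $G_0$ that is the preimage of a subgroup of some $G_j$, and since $\hat f(\gamma)$ is determined by finitely many prolongation coordinates (the map $\hat f$ being $\Ld$-definable, its graph is a genuine definable set, so only boundedly much of the projective limit is hit), we may replace $G_0$ by the relevant $G_j$, which is $L$-interpretable over $A$. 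Setting $G := G_j$ and $\Gamma\to G$ the corestriction of $\hat f$ completes the proof; the "moreover over $A$" clause is automatic since every construction above — $\Sigma$, $F$, $f$, $\hat f$, the projective system — was carried out over $A = \dcld(A)$. The step I expect to be the main obstacle is the second one: verifying that the putative extension $\hat f(\gamma) = f(\gamma a)f(a)^{-1}$ is well-defined and $\Ld$-definable over $A$, i.e. genuinely independent of the generic $a$ and landing in a set-level (not just pro-level) definable group, since this is where the passage from the pregroup calculus of \cref{pregroup} to a concrete embedding of the original definable group really happens and where the unstable, Morley-rank-via-$\DCF_{\Delta,0}$ bookkeeping must be handled carefully.
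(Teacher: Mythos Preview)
Your proposal is correct and follows the paper's approach closely: build the pregroup $(\Sigma,F)$ from the $L$-type of $\prol{\omega}$ of a generic and the $L$-$*$-definable map underlying multiplication, invoke \cref{pregroup} and \cref{*group prolim}, then pass to some $G_j$ in the projective system. Two points of comparison are worth making.

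First, what you flag as ``the main obstacle'' --- extending $f$ from generics to all of $\Gamma$ --- is not done by hand in the paper. Instead, one observes that $\Sigma(\prol{\omega}(x))$ is itself a translation-invariant $A$-definable global partial $\Ld$-type on the $\Ld$-definable group $\Gamma$, so \cref{gen morphism} applies directly to the pair $(\Gamma,\Sigma(\prol{\omega}(x)))$ and the map $f\circ\prol{\omega}$ into $G_0$, yielding the embedding $g:\Gamma\to\prolim_i G_i$ in one stroke. Your $\hat f(\gamma)=f(\gamma a)f(a)^{-1}$ argument is of course exactly what underlies the proof of \cref{gen morphism}, so nothing is lost, but there is no need to reprove it. The descent to a single $G_j$ is then immediate: $\Gamma$ is genuinely definable (a finite tuple), so by compactness the kernel of $\pi_j\circ g$ is already trivial for some $j$.

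Second, your verification of pregroup axiom~(1) is more tentative than necessary. The paper does not use any stationarity or Morley-rank bookkeeping there. One first checks, for an arbitrary $K$-point $a\in\Gamma(K)$ (not a generic over $A$), that $F(\prol{\omega}(a),\Sigma)=\Sigma$; this is exactly \cref{inj generic}. Then, since $\Sigma$ is $A$-definable, the set of tuples $a_\omega$ with $F(a_\omega,\Sigma)=\Sigma$ is $*$-definable over $A$ and contains $\prol{\omega}(\Gamma(K))$; by elementarity it contains $\prol{\omega}(\Gamma(M))$ and hence all realizations of $\restr{\Sigma}{A}$. The same ``verify for $K$-points, then extend by definability'' pattern handles axioms~(2) and~(3).
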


\begin{proof}
Let \(\Sigma(x_\omega)\) be the global partial \(L\)-type such that
\(\Sigma(\prol{\omega}(x))\) is the partial type of generics in \(\Gamma\). By
\cref{gen def}, it is \(L\)-definable over \(A\). By \cref{dcl Td}.3, there
exists a map \(F\) which is pro-definable in \(L\) over \(A\) such that for
every \(a,b\in\Gamma\), we have \(\prol{\omega}(a\cdot b) =
F(\prol{\omega}(a),\prol{\omega}(b))\). Likewise, there are functions \(G_1\) and
\(G_2\) which are pro-definable in \(L\) over \(A\) such that for any \(a, b
\in \Gamma\), we have \(\prol{\omega}(a\cdot b^{-1}) =
G_1(\prol{\omega}(a),\prol{\omega}(b))\) and \(\prol{\omega}(a^{-1}\cdot b) =
G_2(\prol{\omega}(a),\prol{\omega}(b))\).

\begin{Claim}
\((\Sigma,F)\) is a pregroup.
\end{Claim}

\begin{proof}
First fix \(a\in\Gamma(K)\) and \(b\in\Gamma(M)\). By \cref{inj generic},
\(a\cdot b\) is generic in \(\Gamma\) over \(K\) if and only if \(b\) is. Namely,
\(\prol{\omega}(b)\) realizes \(\Sigma\) if and only if \(\prol{\omega}(a\cdot
b) = F(\prol{\omega}(a),\prol{\omega}(b))\) also does. As \(\Sigma(x_\omega)\)
is the \(L\)-type of all tuples \(\prol{\omega}(b)\) for \(b\in \Gamma\) generic
over \(K\), it follows that for any \(b_\omega\models\Sigma\), we have
\(F(\prol{\omega}(a),b_\omega)\models\Sigma\) and moreover, that every
\(c_\omega\models\Sigma\) is of the form \(F(\prol{\omega}(a),b_\omega)\) for
some \(b_\omega\models\Sigma\). So we have \(F(\prol{\omega}(a),\Sigma) =
\Sigma\). 

By definability of \(\Sigma\), the set of tuples \(a_\omega\in K\) such that
\(F(a_\omega,\Sigma) = \Sigma\) is pro-definable over \(A\). As it includes
\(\prol{\omega}(a)\) for any \(a\in\Gamma\), it also includes all realizations
of \(\restr{\Sigma}{A}\). This yields condition 1 in \cref{def pregroup}.

Condition 2 and 3 hold for similar reasons. For example, let us consider
condition 2. Again, fix \(a\in\Gamma(K)\) and \(b\in\Gamma(M)\) and let \(c =
a\cdot b\). Then \(\prol{\omega}(c) = F(\prol{\omega}(a),\prol{\omega}(b))\). We
also have \(\prol{\omega}(a) = G_1(\prol{\omega}(c),\prol{\omega}(b))\) and
\(\prol{\omega}(b) = G_2(\prol{\omega}(a),\prol{\omega}(c))\). So, by definition
of \(\Sigma\), for every \(b_\omega\models\Sigma\), we have \(\prol{\omega}(a) =
G_1(F(\prol{\omega}(a),b_\omega),b_\omega)\) and \(b_\omega =
G_2(\prol{\omega}(a),F(\prol{\omega}(a),b_\omega))\). Again, by definability of
\(\Sigma\) over \(A\), the set of tuples \(a_\omega\) such that \(a_\omega =
G_1(F(a_\omega,b_\omega),b_\omega) = G_2(b_\omega,F(a_\omega,b_\omega))\) is
pro-definable in \(L\) over \(A\), and it contains \(\prol{\omega}(a)\) for all
\(a\in\Gamma(K)\). In particular, it contains all realizations of
\(\restr{\Sigma}{A}\). This yields condition 2 in \cref{def pregroup}.
\end{proof}

Let us now come back to the proof of the theorem. By Propositions \ref{pregroup}
and \ref{*group prolim}, we obtain a projective limit \(G = \prolim_i G_i\) of
groups which are \(L\)-interpretable over \(A\), as well as an injective map
\(f\) from realizations of \(\restr{\Sigma}{A}\) to \(G\) (in \(M\)) which is
pro-definable in \(L\) over \(A\) and such that for every
\(a\models\restr{\Sigma}{A}\) and \(b\models\restr{\Sigma}{Aa}\), we have
\(f(F(a,b)) = f(a)\cdot f(b)\).

Note that \(\Sigma(\prol{\omega}(x))\) is the global definable over \(A\)
translation invariant \(\Ld\)-type of generics of \(\Gamma\) over \(K\). We can
therefore apply \cref{gen morphism} to the map \(f\circ\prol{\omega}\) from
realizations of \(\restr{\Sigma(\prol{\omega}(x))}{A}\) to \(G\) to obtain a
group embedding \(g : \Gamma \to G = \prolim_i G_i\) which is pro-definable over
\(A\). By compactness, the composition of \(g\) with the projection on some
\(G_i\) is already injective and this completes the proof.
\end{proof}

\sloppy
\printbibliography

\end{document}